\documentclass[reqno,a4paper]{amsart}

\usepackage{amsfonts,amsmath,amssymb,amsthm}
\usepackage{alltt,esvect,graphicx,stmaryrd,tikz,xcolor}
\usepackage{pgffor}
\usepackage[arrow, matrix, curve]{xy}
\usepackage[titletoc]{appendix}
\usepackage[linesnumbered]{algorithm2e}
\usepackage[T1]{fontenc}
\usepackage[utf8]{inputenc}
\usepackage{helvet}
\usetikzlibrary{calc,intersections,through,shapes,arrows}
\usepackage{mathtools}
\usepackage{subfig}
\usepackage{url}
\usepackage{booktabs}
\usepackage{comment}
\usepackage[textsize=tiny]{todonotes}

\usepackage{enumitem} 
\usetikzlibrary{cd}
\usetikzlibrary{patterns}
\usetikzlibrary{math}
\usetikzlibrary{shapes}
\usetikzlibrary{decorations.fractals}

\usepackage{xstring}

\usepackage{cancel}
\usepackage{hyperref}

\newtheorem{theorem}{Theorem}[section]
\newtheorem{lemma}[theorem]{Lemma}
\newtheorem{proposition}[theorem]{Proposition}
\newtheorem{corollary}[theorem]{Corollary}

\theoremstyle{definition}
\newtheorem{definition}[theorem]{Definition}
\newtheorem{example}[theorem]{Example}

\newtheorem{remark}[theorem]{Remark}

\definecolor{colorX}{RGB}{240,0,0}
\definecolor{colorY}{RGB}{0,0,160}
\definecolor{colorZ}{RGB}{0,160,0}

\newcommand{\N}{\mathbb{N}}
\newcommand{\Z}{\mathbb{Z}}

\newcommand{\R}{\mathbb{R}}
\newcommand{\C}{\mathbb{C}}

\newcommand{\A}{\mathbb{A}}
\newcommand{\T}{\mathbb{T}}
\newcommand{\V}{\mathbb V}

\newcommand{\CF}{\mathcal{F}}
\newcommand{\CK}{\mathcal{K}}

\newcommand{\CM}{{\mathcal M}}

\newcommand{\CE}{\mathcal{E}}
\newcommand{\CH}{\mathcal{H}}
\newcommand{\CO}{\mathcal{O}}

\newcommand{\Hom}{\mathop{\rm Hom}\nolimits}

\newcommand{\Id}{\mathop{\rm Id}\nolimits}

\renewcommand{\ker}{\mathop{\rm ker}\nolimits}
\newcommand{\im}{\mathop{\rm im}\nolimits}

\newcommand{\ord}{\operatorname{ord}}

\newcommand{\Quot}{\mr{Quot}\,}
\newcommand{\Spec}{\mr{Spec}\,}

\newcommand{\mr}{\mathrm}

\newcommand{\mc}{\mathcal}

\newcommand{\bp}{\begin{para}}
\newcommand{\ep}{\end{para}}
\newcommand{\bps}{\begin{paras}}
\newcommand{\eps}{\end{paras}}
\newcommand{\benum}{\begin{enumerate}[{\rm(i)}]}
\newcommand{\eenum}{\end{enumerate}}

\newcommand{\Div}{\operatorname{Div}}
\newcommand{\Cl}{\operatorname{Cl}}

\newcommand{\kSt}{\,\big|\;}

\newcommand{\surj}{\rightarrow\hspace{-0.8em}\rightarrow}

\newcommand{\kss}{\scriptscriptstyle}
\newcommand{\kbb}{{\kss \bullet}}

\newcommand{\toric}{\T\V}  
\newcommand{\opn}{\operatorname}

\newcommand{\kG}{\Gamma}

\newcommand{\ssect}[1]{Subsection~\ref{#1}}

\newcommand{\CT}{{\mathcal T}}

\newcommand{\Matr}[2]{\left(\begin{array}{@{}*{#1}{r}@{}} #2
\end{array}\right)}

\DeclareMathOperator{\spann}{span}
\DeclareMathOperator{\diag}{diag}

\definecolor{oliwkowy}{HTML}{627037}
\definecolor{lightblue}{RGB}{135,206,250}
\definecolor{darkblue}{RGB}{0,0,160}
\definecolor{darkgreen}{RGB}{0,160,0}
\definecolor{veryPeri}{RGB}{102,103,171}
\definecolor{intOrange}{rgb}{1.0,.310,.0}
\definecolor{MidnightBlue}{RGB}{102,103,171}

\newcounter{para}[section]
\newenvironment*{para}[1]{\refstepcounter{para}\noindent\ignorespaces{\bf\thepara.~#1.}}{\ignorespacesafterend\bigskip}
\newenvironment*{paras}[1]{{\bf #1.}}{\ignorespacesafterend\bigskip}
\numberwithin{para}{section}

\newcommand{\PP}{\mathbb P}

\setlist[enumerate,1]{label = (\roman*),ref = \theenumii.\roman*}

\newcommand{\kk}{{\operatorname{k}}}

\renewcommand{\div}{\operatorname{div}}\newif\ifnotext

\definecolor{colorAB}{RGB}{240,0,0}
\definecolor{colorBC}{RGB}{0,0,160}
\definecolor{colorCD}{rgb}{1.0,.310,.0}
\definecolor{colorDA}{RGB}{0,160,0}

\newcommand{\TT}{{\opn{T}}}

\definecolor{cRayA}{rgb}{0.0,0.6,0.0}
\definecolor{cRayB}{rgb}{0.0,0.0,1.0}
\definecolor{cRayC}{rgb}{1.0,0.0,0.0}

\definecolor{cConeA}{rgb}{0.0,0.6,0.0}
\definecolor{cConeB}{rgb}{0.0,0.0,1.0}
\definecolor{cConeC}{rgb}{1.0,0.0,0.0}

\newcommand{\RSK}{\mathcal{K}_{\CE}}
\definecolor{green}{rgb}{0.0,0.6,0.0}
\definecolor{blue}{rgb}{0.0,0.0,1.0}
\definecolor{red}{rgb}{1.0,0.0,0.0}

\mathchardef\mhyphen="2D

\newcommand{\OD}{\mathcal O_X(\WD)}
\newcommand{\CV}{\mathcal V}
\newcommand{\CU}{\mathcal U}

\newcommand{\idm}{\mathfrak{m}}
\newcommand{\hatDiv}{\operatorname{\Div}}

\newcommand{\Cox}{\operatorname{Cox}}
\newcommand{\QCox}{{\Cox}}
\newcommand{\ux}{\underline{x}}
\newcommand{\uz}{\underline{z}}

\newcommand{\PD}{{P}}
\newcommand{\slice}{{slice}}

\newcommand{\HM}{{\CH\hspace{-0.09em}\CM}}
\newcommand{\partialP}{{\partial}}

\newcommand{\dd}{{\scalebox{1.1}{$\opn{d}$}}}
\newcommand{\WD}{\mathcal{W}}
\newcommand{\agnate}{{agnate}}
\newcommand{\KS}{{K}}
\newcommand{\MN}[2]{{\langle#1,#2\rangle}}
\DeclareMathOperator{\antidiag}{antidiag}
\newcommand{\rhoSig}{\hat{\rho}_\sigma} 
\newcommand{\piN}{{\pi}}
\newcommand{\ta}{{\widetilde{a}}}
\newcommand{\taSig}{{a(\sigma)}}
\newcommand{\rhoSt}{{\rho^*}}
\newcommand{\rhoStSig}{\hat{\rho}^*_\sigma}
\newcommand{\oldMinus}{}
\newcommand{\oldMinusWirdPlus}{{+}}
\newcommand{\oldPlusWirdMinus}{{-}}
\newcommand{\newMinus}{{-}}
\newcommand{\sliceU}{\CU}

\begin{document}
\parindent0mm

\title[The Weil decoration of the Horrocks-Mumford bundle]
{The Weil decoration of the Horrocks-Mumford bundle}

\author[K.~Altmann]{Klaus Altmann%
}
\address{Institut f\"ur Mathematik,
FU Berlin,
K\"onigin-Luise-Str.~24-26,
D-14195 Berlin
}
\email{altmann@math.fu-berlin.de}
\author[A.~Hochenegger]{Andreas Hochenegger}
\address{
Dipartimento di Matematica ``Francesco Brioschi'',
Politecnico di Milano,
via Bonardi 9,
20133 Milano 
}
\email{andreas.hochenegger@polimi.it}
\author[F.~Witt]{Frederik Witt}
\address{
Fachbereich Mathematik,
U Stuttgart,
Pfaffenwaldring 57,
D-70569 Stuttgart
}
\email{witt@mathematik.uni-stuttgart.de}
\thanks{{\bf MSC 2020:} 
14C20, 
14F06, 
14M25 
\hfill\newline
\indent{\bf Key words:} divisor, Horrocks-Mumford bundle, Weil decoration, reflexive sheaf, toric variety}

\begin{abstract}
For a normal algebraic variety we generalise the relation between reflexive rank one sheaves and Weil divisors to reflexive sheaves of arbitrary rank and so-called Weil decorations. As an application, we define and study a natural generalisation of the celebrated Horrocks-Mumford bundle. 
\end{abstract}

\maketitle

\section{Introduction}
\label{sec:Intro}
Let $X$ be a normal algebraic variety. A coherent sheaf on $X$ is said to be {\em reflexive} if the natural inclusion into its double dual is actually an isomorphism; in particular, it is torsion-free. Reflexive sheaves define a handier and more versatile class than locally free sheaves, see~\cite{hartreflexiv}. For instance, every Weil divisor $D$ of $X$ specifies a reflexive sheaf of rank one $\CO_X(D)$ inside the field of rational functions $K:=K(X)$ of $X$: If $U\subseteq X$ is open, then  
\begin{equation}
\label{eq:DefOXD}
\CO_X(D)(U):=\{f\in K^*\mid\big(D+\div(f)\big)|_U\geq0\}\cup\{0\}.
\end{equation}
Conversely, any reflexive rank one sheaf is isomorphic to some $\CO_X(D)$. 

\medskip

We generalise this correspondence to reflexive sheaves of higher rank. Namely, any reflexive sheaf $\CE$ sits inside its generic stalk $\CE_\eta$ by torsion-freeness, and every $0\neq e\in\CE_\eta$ gives rise to the reflexive rank one sheaf 
\[
\CE(e)(U):=\big(K\cdot e\big)\cap\CE(U)\stackrel{1/e}{\hookrightarrow}\KS.
\]
Therefore, we can associate with $e$ a unique Weil divisor $D(e)$. We call
\[
\WD_\CE\colon\CE_\eta\setminus\{0\}\to\Div(X),\quad e\mapsto D(e).
\]
the {\em Weil decoration of $\CE$}; it behaves like a $K$-valuation on $\CE_\eta$, cf.\ Proposition~\ref{prop:WDRS}. Moreover, any such assignment $\CV\setminus{0}\to\Div(X)$ on a finite dimensional $K$-vector space $\CV$ arises this way (Proposition~\ref{prop:Bijection}). Finally, sheaf morphisms $\varphi\colon\CE\to\CF$ between reflexive sheaves translate into $K$-linear maps $\varphi_\eta\colon\CE_\eta\to\CF_\eta$ with $\WD_\CE(e)\leq\WD_\CF(\varphi_\eta(e))$ which we take as morphisms between Weil decorations. 

\medskip

{\bf Theorem A (see~\ref{thm:EquivalenceCat}).} 
{\em The category of reflexive sheaves is equivalent to the category of Weil decorations.}

\medskip

{\bf Remark.} The idea of a Weil decorations goes back to a previous construction of the authors in the context of toric geometry~\cite{TRS}. In fact, both constructions are equivalent for toric sheaves, that is, torus linearised reflexive sheaves on a toric variety, see \ssect{subsec:TS}. 

\medskip

A major thread of this article is to supply tools for the computation of Weil decorations. Let $\WD(e)_\PD$ denote the coefficient of $\WD(e)$ at the prime divisor $\PD$ of $X$. 

\medskip

{\bf Theorem B (see~\ref{prop:KerWeilDecos} and~\ref{prop:QuotWeilDecos}).} 
{\em Let $\CE$ and $\CE'$ be two reflexive sheaves with Weil decorations $\WD$ and $\WD'$. 

\smallskip

{\rm(i)} If $\CE'$ is the kernel of a morphism $\CE\to\CE''$ with $\CE''$ torsion-free, then $\WD'=\WD|_{\CE'_\eta}$.

\smallskip

{\rm(ii)} If $\mu\colon\CE\to\CE'$ is surjective, then $\WD'(e')_\PD=\max\limits_{e\in\mu_\eta^{-1}(e')}\WD(e)_\PD$.}

\medskip

Theorem B is tailor-made for the computation of Weil decorations of sheaves given by a {\em monad}. Indeed, passing to the generic stalks turns this into linear algebra combined with an optimisation problem to determine the maximum. As an example, we compute the Weil decoration of the celebrated Horrocks-Mumford bundle on $\PP^4$~\cite{hm}. Its importance stems from the fact that it is so far the only known indecomposable rank two vector bundle on $\PP^4$ in characteristic $0$. 

\medskip

Turning the tables we can also make use of Weil decorations to define reflexive sheaves. Consider $X$ together with a simple normal crossing divisor $D$. Moreover, choose a unit $h_\PD$ in the residue field $\kappa(\PD)$ for any prime divisor $\PD$ supporting $D$; for $f\in K$, we let $f(\PD)$ be the value of $f$ at $\PD$ if defined.

\medskip

{\bf Theorem C (see~\ref{prop:SemiNorm} and~\ref{prop:WDHMtype}).} 
{\em The map $\WD\colon\!K(X)^2\!\setminus\!\{(0,0)\}\!\to\!\Div(X)$ given by 
\[
\WD(f,g)_\PD=
\begin{cases}
\min\{\ord_P(f),\ord_P(g)\}+1,&\PD\in\opn{supp}(D)\text{ and }(f/g)(\PD)=h_\PD\\
\min\{\ord_P(f),\ord_P(g)\},&\text{else,}
\end{cases}
\]
defines a Weil decoration.}

\medskip

For instance, we recover the Horrocks-Mumford bundle $\HM$ on $\PP^4$ by taking $D=\partial\PP^4=\sum_{\rho=0}^4H_\rho$ for the coordinate hyperplanes $H_\rho=\{z_\rho=0\}$, and
\begin{equation}
\label{eq:Hrho}
h_{H_\rho}=z_{\rho+1}z_{\rho-1}/z_{\rho+2}z_{\rho-2}\in\kappa^*(H_\rho),\quad\rho\in\Z/5\Z.
\end{equation}
In fact, the assignment in~\eqref{eq:Hrho} readily generalises to toric varieties $X$ with canonical divisor $D=\partial X$, and we consider some examples in Section~\ref{sec:HMtype}.

\bigskip

{\em Conventions.} In this article we let $\kk$ be an algebraically closed field of characteristic zero. We always work with normal algebraic $\kk$-varieties, that is, normal, separated and integral schemes of finite type over $\kk$. In particular, $X$ is regular in codimension one: any one-dimensional local ring is regular, thus a discrete valuation ring (DVR). 

\medskip

We let $\eta$ be the generic point of $X$ and $K=K(X)$ be the field of rational functions as well as the induced constant sheaf. By convention, $\PD$ denotes a prime divisor and its generic point, thus both notations $\PD\subseteq X$ and $\PD\in X$ will be used. The group of Weil divisors on $X$ will be written $\Div(X)$. Finally, if $\PD\in X$ is a prime divisor with residue field $\kappa(\PD)=\CO_{X,\PD}/\idm_{X,\PD}$, then evaluation of $f\in K$ at $P$ gives an element $f(\PD)\in\kappa(\PD)\cup\{\infty\}$; in particular, $f(\PD)$ is finite if and only if $f\in\CO_{X,\PD}$.

\section{Weil decorations}
\label{sec:WeilDeco}
Let $\PD$ be a prime divisor with associated valuation $\ord_\PD\colon K\to\Z$ and discrete valuation ring (DVR) $\CO_{X,\PD}$; we gloss over the usual convention of assigning to $0$ the formal value $\infty$. On $\Div(X)$ consider the poset structure 
\[
D\geq D'\quad\Longleftrightarrow\quad D-D'\geq0,\text{ that is, }D-D'\text{ is effective.}  
\]
The greatest lower bound or {\em meet} of two divisors is given by
\[
D\wedge D':=\min\{D,D'\}:=\sum\min\{D_\PD,D'_\PD\}\cdot\PD, 
\]
where $D_\PD$ is the coefficient of $D\in\Div(X)$ with respect to the prime divisor $\PD$. Similarly, we define the smallest upper bound or {\em join} of two divisors by
\[
D\vee D':=\max\{D,D'\}:=\sum\max\{D_\PD,D'_\PD\}\cdot\PD. 
\]

\subsection{Pre-Weil decorations}
\label{subsec:PWD}
Let $\CV$ be an $r$-dimensional $K$-vector space.

\begin{definition}
\label{def:PreWD}
A {\em pre-Weil decoration on $\CV$} is an assignment $\WD\colon\CV\to\hatDiv(X)$ satisfying
\begin{itemize}
\item[{\rm (W0)}] $\WD(v)=\infty$ if and only if $v=0$;

\smallskip

\item[{\rm (W1)}] for all $f\in K$ and $v\in\CV$, we have $\WD(f\cdot v)=\div(f)+\WD(v)$;

\smallskip

\item[{\rm (W2)}] for all $v$, $v'\in\CV$, we have $\WD(v+v')\geq\WD(v)\wedge\WD(v')$.
\end{itemize}
The {\em rank} of the pre-Weil decoration is $r=\dim_K\CV$.
\end{definition}

\begin{remark}
A pre-Weil decoration induces on $\CV$ a family of non-archimedian semi-norms over the valued fields $(K,\ord_\PD)$ given by the $\PD$-coefficients of $\WD$, namely
\[
\CV\to\Z,\quad v\mapsto|v|_\PD:=\WD(v)_\PD.
\]
Conversely, any such a $\PD$-indexed family gives a pre-Weil decoration defined by 
\begin{equation}
\label{eq:WDFam}
\WD(v)=\sum|v|_\PD\PD 
\end{equation}
provided that $|v|_\PD=0$ except for finitely many prime divisors.
\end{remark}

The geometric relevance of pre-Weil decorations is this.

\begin{proposition}
\label{prop:QCSPWD}
Let $\WD\colon\CV\to\hatDiv(X)$ be a pre-Weil decoration. Then
\[
\OD(U):=\{v\in\CV\mid\WD(v)|_U\geq0\}\subseteq\CV
\]
defines the quasi-coherent sheaf {\em $\OD$ associated with $\WD$}. Its generic stalk is $\CV$.
\end{proposition}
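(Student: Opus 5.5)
We need to show four things: that $\OD$ is a sheaf of $\CO_X$-modules, that it is quasi-coherent, and that its generic stalk is $\CV$. The plan is to verify the module and sheaf axioms directly from (W0)--(W2), then prove quasi-coherence on affines, and finally compute the generic stalk.

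\emph{Module and sheaf structure.} Fix $U$ open. By (W0) we have $\WD(0)=\infty$, so $0\in\OD(U)$. Closure under sums follows from (W2): if $\WD(v)|_U\geq0$ and $\WD(v')|_U\geq0$, then
\[
\WD(v+v')|_U\geq\big(\WD(v)\wedge\WD(v')\big)|_U\geq0.
\]
For $f\in\CO_X(U)$ we have $\div(f)|_U\geq0$, and (W1) gives $\WD(fv)|_U=\div(f)|_U+\WD(v)|_U\geq0$. Hence $\OD(U)$ is an $\CO_X(U)$-submodule of $\CV$. Restriction maps are the inclusions $\OD(U)\subseteq\OD(U')$ for $U'\subseteq U$. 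The sheaf axioms then come from two facts. First, all sections live in the fixed vector space $\CV$, which gives locality. Second, for an open cover $U=\bigcup U_i$, a divisor is effective on $U$ if and only if it is effective on every $U_i$, since each prime divisor meeting $U$ meets some $U_i$. So compatible sections, which are all the same element $v$ of $\CV$, glue to $v\in\OD(U)$.

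\emph{Quasi-coherence.} We show that for $U=\Spec A$ affine and $0\neq s\in A$,
\[
\OD(U_s)=\OD(U)_s.
\]
The inclusion $\supseteq$ holds because $s$ is a unit on $U_s$. For $\subseteq$, take $v$ with $\WD(v)|_{U_s}\geq0$. The divisor $\WD(v)|_U$ has only finitely many negative coefficients, and these sit at prime divisors contained in $V(s)$. Each such $P$ satisfies $\ord_P(s)\geq1$, so for $n\gg0$ the divisor $n\div(s)+\WD(v)$ is effective on $U$. By (W1) this gives $s^nv\in\OD(U)$, hence $v\in\OD(U)_s$. We need $\WD(v)$ to be a genuine Weil divisor, with finitely many nonzero coefficients, for $v\neq0$; this is built into $\hatDiv(X)$ together with (W0). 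Since this equality holds for every basic open of every affine, $\OD|_U=\widetilde{\OD(U)}$, so $\OD$ is quasi-coherent. This is the only step needing care: it is the finiteness argument just given.

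\emph{Generic stalk.} The stalk at $\eta$ is $\bigcup_U\OD(U)\subseteq\CV$. Given $v\neq0$, let $U$ be the complement of the finitely many prime divisors in the support of $\WD(v)$. Then $\WD(v)|_U=0\geq0$, so $v\in\OD(U)$. Hence the generic stalk is all of $\CV$.
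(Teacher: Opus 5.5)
Your proof is correct and follows the paper's route: the module structure comes from (W0)--(W2), quasi-coherence from $\OD(U_s)=\OD(U)_s$ by using a high power of $s$ to clear the finitely many negative coefficients of $\WD(v)$ along $V(s)$, and then the generic stalk is computed. Your generic-stalk step is in fact more complete than the paper's: you supply the inclusion $\CV\subseteq\OD_\eta$ from the finite support of $\WD(v)$, whereas the paper's appeal to $\OD(U)\subseteq\CV$ on its own only gives the reverse inclusion.
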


\begin{example}
\label{exam:LineBundle}
In view of (W1) in Definition~\ref{def:PreWD}\,, a pre-Weil decoration $\WD\colon K\to\hatDiv(X)$ is already determined by $D=\WD(1)$ in $\Div(X)$. In particular, its associated sheaf $\OD$ is precisely the sheaf $\CO_X(D)$ from~\eqref{eq:DefOXD}.
\end{example}

\begin{proof}[Proof of Proposition~\ref{prop:QCSPWD}]
Since for two open subsets $U'\subseteq U$, the corresponding restriction map is just inclusion, $\OD$ is indeed a sheaf. Moreover, $\div(f)|_U\geq0$ for any function $f\in K$ regular on $U$. Hence (W0), (W1) and (W2) immediately imply that $\OD(U)$ is an $\CO_X(U)$-module.

\smallskip

To check quasi-coherency let $U=\Spec A$ be open and $f\in A$. Then $s\in\OD(U_f)$ implies $\WD(s)_\PD\geq0$ for all $\PD\in U_f$ while $\ord_\PD(f)>0$ on $\PD\in U\setminus U_f$. Hence $f^Ns\in\OD(U)$ for a suitable $N\in\N$ which implies $\OD(U_f)=\OD(U)_f$. 

\smallskip

Finally, $\OD_\eta=\CV$ follows from the inclusion $\OD(U)\subseteq\CV$.
\end{proof}

\begin{remark}
(i) For all $\PD\in X$, $\,\OD_\PD=\{v\in\CV\mid\WD(v)_\PD\geq0\}$.

\smallskip

(ii) For all $f\in K^*$ we have $\CO_X(\WD+\div(f))=f^{-1}\cdot\OD$. 

\smallskip

(iii) For all $v\in V$ we have  
\begin{equation}
\label{eq:OD}
\CO_X(\WD(v))\cdot v=\KS\cdot v\cap\OD.
\end{equation}
Indeed, $f\cdot v\in\big(\KS\cdot v\cap\OD\big)(U)$ for $f\in K$ if and only if $f\cdot v\in\OD(U)$. By (W1), this is equivalent to $f\in\CO_X(\WD(v))(U)$ and thus to $f\cdot v\in\CO_X(\WD(v))(U)\cdot v$.
\end{remark}

The following semi-norms serve as building blocks for pre-Weil decorations we consider in this article, cf.\ Example~\ref{exam:PreWeilNotWeil} and Definition~\ref{def:HMtype}.

\begin{proposition}
\label{prop:SemiNorm}
Let $\PD$ be a prime divisor of $X$ and $h_\PD\in\kappa(\PD)^*$. Then 
\[
\varphi_{h,\PD}(f,g)=
\begin{cases}
\min\{\ord_\PD(f),\ord_\PD(g)\}+1,&\tfrac fg(\PD)=h_\PD\\
\min\{\ord_\PD(f),\ord_\PD(g)\},&\text{else}
\end{cases}
\]
induces a non-archimedian semi-norm on $K^2$ over $(K,\ord_\PD)$.
\end{proposition}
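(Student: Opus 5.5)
Write $\nu=\ord_\PD$, and for $v=(f,g)\in K^2\setminus\{0\}$ put $m(v)=\min\{\nu(f),\nu(g)\}$. We must check the three conditions (W0)--(W2) for the single coefficient $\varphi:=\varphi_{h,\PD}$, read with respect to the valued field $(K,\nu)$. These are: $\varphi(v)=\infty$ if and only if $v=0$; $\varphi(a v)=\nu(a)+\varphi(v)$ for $a\in K^*$; and $\varphi(v+v')\geq\min\{\varphi(v),\varphi(v')\}$.

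The plan is to reformulate $\varphi$ intrinsically. Choose a uniformiser $t$ of the DVR $\CO_{X,\PD}$. Any $v\neq0$ can be written uniquely as $v=t^{m(v)}\bar v$ with $\bar v\in\CO_{X,\PD}^2$ primitive, meaning not both coordinates lie in $\idm_{X,\PD}$. Its reduction $\bar v(\PD)\in\kappa(\PD)^2\setminus\{0\}$ is then well defined up to the scalar $\kappa(\PD)^*$ coming from the choice of $t$. Let $L=\kappa(\PD)\cdot(h_\PD,1)$. One checks that $\tfrac fg(\PD)=h_\PD$ holds if and only if $\bar v(\PD)\in L$. Indeed, $\tfrac fg(\PD)=h_\PD\neq0,\infty$ forces $\nu(f)=\nu(g)$, so both coordinates of $\bar v$ are units with ratio reducing to $h_\PD$. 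Conversely, membership in $L$ with $h_\PD\neq0$ forces both coordinates to be units. (The case $g=0$ never gives $h_\PD$, and neither does the case where one coordinate is in $\idm$, since $h_\PD\in\kappa(\PD)^*$.) With this reformulation,
\[
\varphi(v)=\max\{k\in\Z\mid v\in t^k\Lambda\},\qquad \Lambda:=\{w\in\CO_{X,\PD}^2\mid w(\PD)\in L\},
\]
where $\Lambda$ is the $\CO_{X,\PD}$-lattice $\CO_{X,\PD}\cdot(\tilde h,1)+t\,\CO_{X,\PD}^2$ for a lift $\tilde h$ of $h_\PD$. Note that $t\CO^2\subseteq\Lambda\subseteq\CO^2$, so $v\in t^k\Lambda$ for $k=m(v)-1$ always. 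We also have $v\in t^{m(v)}\Lambda$ exactly when $\bar v(\PD)\in L$, and never $v\in t^{m(v)+1}\Lambda$, because $t^{m(v)+1}\Lambda\subseteq t^{m(v)+1}\CO^2$. Hence the maximum equals $m(v)+1$ in the special case and $m(v)$ otherwise, which agrees with the definition of $\varphi$.

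Once $\varphi$ is the ``order function'' of a lattice $\Lambda\subset K^2$, the three axioms follow formally.
\begin{itemize}
\item[(W0)] $v=0$ lies in every $t^k\Lambda$, while $v\neq0$ lies in only boundedly many, as shown above.
\item[(W1)] For $a=ut^{\nu(a)}$ with $u$ a unit, $u\Lambda=\Lambda$ because $\Lambda$ is an $\CO$-module. Hence $av\in t^{k+\nu(a)}\Lambda$ if and only if $v\in t^k\Lambda$.
\item[(W2)] If $v,v'\in t^k\Lambda$ with $k=\min\{\varphi(v),\varphi(v')\}$, then $v+v'\in t^k\Lambda$, because the $t^j\Lambda$ are nested decreasing additive groups.
\end{itemize}

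The only step needing care is verifying the equivalence between the condition $\tfrac fg(\PD)=h_\PD$ and the reduction condition $\bar v(\PD)\in L$. This includes the edge cases $f=0$ or $g=0$ and unequal valuations, and this is exactly where $h_\PD\neq0$ is used. An alternative for (W2) without lattices is a direct case split. If $m(v)\neq m(v')$, use $m(v+v')\geq\min\{m(v),m(v')\}$ and note that $\varphi\leq m+1$. If $m(v)=m(v')$, reduce modulo $t^{m+1}$ and use that $L$ is a linear subspace. The lattice formulation packages this cleanly, and I would present it that way.
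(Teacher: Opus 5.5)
Your route is genuinely different from the paper's. The paper treats homogeneity as immediate and proves the strong triangle inequality by hand. It writes $\varphi_{h,\PD}(v)=\min\{|f|,|g|\}+\epsilon(v)$ and isolates the only problematic configuration, $\epsilon(v+v')=0$ with $\epsilon(v)=1$. It then splits on $\epsilon(v')$, each time computing the residue of $\tfrac{f+f'}{g+g'}$ at $\PD$. You instead show that every superlevel set $\{\varphi_{h,\PD}\geq k\}$ is $t^k$ times one fixed $\CO_{X,\PD}$-submodule of $K^2$, namely the preimage of the line $\kappa(\PD)(h_\PD,1)$ under reduction, suitably shifted. Then (W1) and (W2) reduce to this being a module over a DVR. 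This is shorter and more conceptual. It explains why the extra $+1$ is attached to a \emph{linear} condition on the reduction, and it shows that any lattice in $K^r$ gives such a semi-norm. It also exhibits the lattice as the stalk $\OD_\PD$, consistent with $\CO_X^2\subseteq\HM(h)\subseteq\CO_X(D)^2$ in Remark~\ref{rem:CanInc}. Your check that $\tfrac fg(\PD)=h_\PD$ if and only if $\bar v(\PD)\in L$, including the edge cases, is correct.

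There is, however, an off-by-one slip in the identification. Your own three observations are: $v\in t^k\Lambda$ always for $k=m(v)-1$, for $k=m(v)$ iff $\bar v(\PD)\in L$, and never for $k\geq m(v)+1$. These show that $\max\{k\mid v\in t^k\Lambda\}$ equals $m(v)$ in the special case and $m(v)-1$ otherwise, not $m(v)+1$ and $m(v)$. For example, $v=(\tilde h,1)$ has $\varphi_{h,\PD}(v)=1$, but $v\notin t\Lambda\subseteq t\,\CO_{X,\PD}^2$. The correct identity is
\[
\varphi_{h,\PD}(v)=\max\{k\in\Z\mid v\in t^k\Lambda'\},\qquad \Lambda':=t^{-1}\Lambda=\CO_{X,\PD}^2+\CO_{X,\PD}\cdot t^{-1}(\tilde h,1).
\]
Since $\CO_{X,\PD}^2\subseteq\Lambda'\subseteq t^{-1}\CO_{X,\PD}^2$ and $\Lambda'$ is an $\CO_{X,\PD}$-module, your verification of (W0)--(W2) goes through verbatim with $\Lambda'$ in place of $\Lambda$. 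As written, though, it uses the false membership $v\in t^{\varphi(v)}\Lambda$.

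Two smaller points. In (W0), what you mean is that the set of $k$ with $v\in t^k\Lambda'$ is bounded above; that set is infinite. Your alternative direct sketch also does not cover the case $m(v)<m(v')$ with $v$ special. There $m(v+v')\geq\min\{m(v),m(v')\}$ and $\varphi\leq m+1$ only give $\varphi(v+v')\geq m(v)$. You again need the reduction modulo $t^{m(v)+1}$, which is exactly the paper's Case~1.
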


\begin{remark}
The first case implies $\ord_\PD(f)=\ord_\PD(g)$.  
\end{remark}

\begin{proof}[Proof of Proposition~\ref{prop:SemiNorm}]
By definition, $\varphi_{h,\PD}(\lambda\cdot(f,g))=\ord_\PD(\lambda)+\varphi_{h,\PD}(f,g)$ for any $\lambda\in K^*$. To check the strong triangle inequality we must show that for any $v=(f,g)$ and $v'=(f',g')$ in $K^2\setminus\{(0,0)\}$, the inequality
\begin{equation}
\label{eq:DesIneq}
\varphi_{h,\PD}(v+v')\geq\min\{\varphi_{h,\PD}(v),\,\varphi_{h,\PD}(v')\}
\end{equation}
holds. To lighten notation we set $|\cdot|:=\ord_P(\cdot)$. Then by definition,
\[
\varphi_{h,\PD}(v)=\min\{|f|,|g|\}+\epsilon(v),\quad\epsilon(v)\in\{0,1\}.  
\]
The problematic case is therefore \fbox{$\epsilon(v+v')=0$} while $\epsilon(v)$ or $\epsilon(v')$ is nontrivial, say $\epsilon(v)=1$. In particular, $|f|=|g|$. We may assume the equalities
\[
\varphi_{h,\PD}(v+v')=\min\{|f+f'|,|g+g'|\}=|f+f'|=\min\{|f|,|f'|\};
\]
the first by symmetry and the second since otherwise, \eqref{eq:DesIneq} holds trivially.

\medskip

{\em Case 1:} $\epsilon(v')=0$. This entails 
\[
\min\{\varphi_{h,\PD}(v),\,\varphi_{h,\PD}(v')\}=\min\{|f|+1,|g|+1,|f'|,|g'|\}=\min\{|f|+1,|f'|,|g'|\}. 
\]
Assuming~\eqref{eq:DesIneq} not to hold implies $|f+f'|<\min\{|f|+1,|f'|,|g'|\}$ and thus
\[
\min\{|f|,|f'|\}=|f+f'|<|f|+1,\,|f'|,\,|g'|.
\]
Hence $|f+f'|=|f|<|f'|$, $|g'|$ and so $|g|=|f|<|g'|$. Therefore $\tfrac{f+f'}{g+g'}(\PD)=\tfrac{f}{g}(\PD)=h_\PD$, which leads to $\epsilon(v+v')=1$, contradicting our initial assumption. 

\medskip

{\em Case 2:} $\epsilon(v')=1$. Then $\tfrac{f'}{g'}(\PD)=h_\PD\quad\text{and}\quad|f'|=|g'|$. Now we cannot have $|g'|=|f'|=|g|=|f|$ for $f\equiv h_\PD\cdot g+\opn{mod}\idm_{X,\PD}$ and $f'\equiv h_\PD\cdot g'+\opn{mod}\idm_{X,\PD}$ implies $f+f'\equiv h_\PD\cdot(g+g')+\opn{mod}\idm_{X,\PD}$, contradicting $\epsilon(v+v')=0$. On the other hand, if, say, $|g'|=|f'|>|g|=|f|$, then $\tfrac{f+f'}{g+g'}(\PD)=\tfrac{f}{g}(\PD)=h_\PD$ contradicts again $\epsilon(v+v')=1$.
\end{proof}

\subsection{Weil decorations and their reflexive sheaf}
\label{subsec:RSWD}

The sheaf $\CO_X(D)$ associated with the pre-Weil decoration $\WD\colon K\to\hatDiv(X)$ sending $1$ to $D$ (cf.\ Example~\ref{exam:LineBundle}) is actually coherent. As we will see in a moment, this is not necessarily true for general pre-Weil decorations. We therefore make the following

\begin{definition}
A a pre-Weil decoration $\WD\colon\CV\to\hatDiv(X)$ is {\em coherent}, if its associated sheaf $\OD$ is coherent. A {\em Weil decoration} is a coherent pre-Weil decoration. 
\end{definition}

For a practical coherence criterion we borrow terminology from the theory of Banach spaces over non-archimidean fields~\cite{monna}.

\begin{definition}
\label{def:Orthogonal}
Let $\WD\colon\CV\to\hatDiv(X)$ be a pre-Weil decoration of rank $r$. A set of vectors $v_1,\ldots,v_s\in\CV$ is called $\PD$-{\em orthogonal} for some prime divisor $\PD$ of $X$, if for all $f_1,\ldots,f_s\in K$,
\[
\WD\big(\sum_{i=1}^sf_iv_i\big)_\PD=\min\{\div(f_i)_\PD\mid i=1,\ldots,s\},
\]
and $U$-{\em orthogonal} if this holds simultanously for all prime divisors inside some open subset $U$ of $X$. We call $\WD$ {\em trivial} if $\WD$ admits an $X$-orthogonal set with $s=r$.
\end{definition}

\begin{remark}
\label{rem:LinInd}
Property (W0) implies that any set of of $\PD$- or $U$-orthogonal vectors must be $K$-linearly independent. If possible, the choice of an $X$-orthogonal basis of $\CV$ induces an isomorphism $\OD\cong\CO_X^r$ (the converse follows directly from Theorem~\ref{thm:EquivalenceCat}). In particular, any trivial pre-Weil decoration is a Weil decoration.
\end{remark}

\begin{definition}
Two pre-Weil decorations $\WD$, $\WD'$ on $\CV$ are {\em \agnate}, if there exists a divisor $D\in\Div(X)$ such that 
\begin{equation}
\label{eq:AgnateInequal}
\WD-D\leq\WD'\leq\WD+D.
\end{equation}
Equivalently, there exists an open set $U$ of $X$ such that $\WD|_U=\WD'|_U$.
\end{definition}

\begin{remark}
We can replace $D$ in~\eqref{eq:AgnateInequal} by any $D'\geq D$. In particular, over an open affine $X=\Spec A$ we can take the divisor of a suitable regular function $f$ for $D'$. Therefore, $\WD'$ and $\WD$ are agnate if and only if we can find an $f\in A$ with
\begin{equation}
\label{eq:Agnate}
f\cdot\OD=\CO_X(\mc W-\div(f))\subseteq\CO_X(\WD')\subseteq\CO_X(\mc W+\div(f))=f^{-1}\OD 
\end{equation}
inside $\CV$.
\end{remark}

\begin{proposition}
\label{prop:CohCond}
The Weil decoration $\WD$ is coherent if and only if $\WD$ is \agnate\ to a trivial one.
\end{proposition}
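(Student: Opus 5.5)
Coherence is local, so I would first reduce to $X=\Spec A$ affine. I read the statement as: $\WD$ is coherent iff it is locally (on an affine cover) agnate to a trivial pre-Weil decoration. Over an affine, the characterisation \eqref{eq:Agnate} of agnateness turns everything into statements about $A$-submodules of $\CV$. So the plan is to compare $M:=\OD(X)\subseteq\CV$ with a free lattice $F=\bigoplus_i A v_i$ coming from a basis $v_1,\dots,v_r$ of $\CV$.

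\emph{Agnate to trivial implies coherent.} Suppose $\WD'$ is trivial with $X$-orthogonal basis $v_1,\dots,v_r$, so $\CO_X(\WD')\cong\CO_X^r$ by Remark~\ref{rem:LinInd}. If $\WD$ is agnate to $\WD'$, then \eqref{eq:Agnate} (applied with the roles of $\WD$ and $\WD'$ swapped, which is fine since agnateness is symmetric) gives some $f\in A$ with $M\subseteq f^{-1}\CO_X(\WD')(X)\cong A^r$. Since $A$ is noetherian, $M$ is a submodule of a finitely generated module and hence finitely generated. Quasi-coherence comes from Proposition~\ref{prop:QCSPWD}, so $\OD$ is coherent.

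\emph{Coherent implies agnate to trivial.} Let $M$ be finitely generated. Because the generic stalk of $\OD$ is $\CV$ (Proposition~\ref{prop:QCSPWD}), $M$ contains a $K$-basis $v_1,\dots,v_r$ of $\CV$. Write generators $m_j=\sum_i (a_{ij}/s)v_i$ with a common denominator $s\in A$. Then $F\subseteq M\subseteq s^{-1}F$. Let $\WD'$ be the pre-Weil decoration $\WD'(\sum f_iv_i):=\bigwedge_i\div(f_i)$. Axioms (W0)–(W2) are immediate for $\WD'$, it is trivial by construction, and $\CO_X(\WD')(X)=F$. To get agnateness I would not compare sections directly but divisors. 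Each $\WD(v_i)$ is a fixed divisor, so there is $D\ge0$ with $\WD(v_i)\ge -D$ for all $i$. Then (W1) and (W2) give $\WD(\sum f_iv_i)\ge \WD'(\sum f_iv_i)-D$.

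The reverse inequality $\WD\le\WD'+D''$ is the main obstacle, because (W2) only bounds $\WD$ from below. For this I would shrink to an open $U$ on which $\CE:=\OD$ is free with basis $v_1,\dots,v_r$. Such a $U$ exists after localising at $s$ and at the generic point: a coherent sheaf with generic stalk $\CV$ is free on a dense open, since $M_s$ is finitely generated and torsion-free over the generic point. On $U$ we then show that $\WD$ and $\WD'$ coincide at every prime divisor $P\in U$. Indeed, $\OD_P=\{v\mid\WD(v)_P\ge0\}$ by the remark after Example~\ref{exam:LineBundle}, and this equals $\bigoplus\CO_{X,P}v_i=\{v\mid\WD'(v)_P\ge0\}$. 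Together with (W1), this forces $\WD(v)_P=\WD'(v)_P$ for all $v$: rescale $v$ by a uniformiser power so that both values can be tested against $0$.

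Finally, use the stated equivalent form of agnateness: two decorations agreeing on a dense open $U$ are agnate. One should make sure this equivalence is actually available. If it is not, I would derive the needed bound from $M\subseteq s^{-1}F$. The divisor $D=\div(s)+\sum_i(\text{polar parts})$ only involves the finitely many prime divisors outside $U$, and at each such $P$ the sandwich $F_P\subseteq M_P\subseteq s^{-1}F_P$ gives $|\WD(v)_P-\WD'(v)_P|\le\ord_P(s)$ by the same rescaling trick.

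The last step is globalisation. Coherence is local, and the affine argument produces a trivial decoration agnate to $\WD$ over each affine chart; the characterisation by agreement on a dense open glues, since any $K$-basis chosen once works on a common dense open. So I would pick the basis $v_i$ globally at the start.
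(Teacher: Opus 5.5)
Your proof is correct and takes essentially the paper's route: on an affine chart, two finitely generated $A$-lattices in $\CV$ with the same generic stalk contain each other up to a nonzero $s\in A$, which gives $\WD'-\div(s)\le\WD\le\WD'+\div(s)$ there, and the converse is noetherianity of $A$. The detour through a free open $U$ and the `agree on a dense open' characterisation is unnecessary, and that characterisation is only reliable once coherence is known; your sandwich bound already holds at every prime divisor of the chart, so for a globally fixed basis you can use $sF\subseteq M\subseteq s^{-1}F$ on each chart $U_j$ and take $D=\sum_j\overline{\div(s_j)}$.
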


\begin{proof}
For the implication we show that any two Weil decorations $\WD$ and $\WD'\colon\CV\to\hatDiv(X)$ are agnate. This is a local condition, so we may assume $X=\Spec A$. Now $\CO_X(\WD)$, $\CO_X(\WD')\subseteq\CV$ are finitely generated $A$-modules with
\[
\CO_X(\WD)\otimes_A\Quot(A)=\CV=\CO_X(\WD')\otimes_A\Quot(A).  
\]
In particular, there exists an $f\in A$ such that~\eqref{eq:Agnate} holds, that is, $\CO_X(\WD)$ and $\CO_X(\WD')$ are agnate. The converse is clear.  
\end{proof}

\begin{example}
\label{exam:PreWeilNotWeil}
To construct a pre-Weil decoration which is not coherent we consider the semi-norms $\varphi_{h,\PD}$ from Proposition~\ref{prop:SemiNorm}. For instance, let $X=\C^1$ and 
\[
h(\PD):=\exp(-\PD)\in\kappa(\PD)^*=\C^* 
\]
for any closed point $\PD\in\C$. By non-rationality of $h$, the equality $(f/g)(\PD)=h(\PD)$ can hold only for finitely many prime divisors. Hence, our family of semi-norms induces a pre-Weil decoration, which, however, is not agnate to a Weil decoration.
\end{example}

The $\CO_X$-module $\CE:=\OD$ is torsionfree since it sits inside $\CV$. By~\cite[1.6]{hartreflexiv}, a torsion-free sheaf is reflexive if and only if the restriction maps $\CE(U)\to\CE(U\setminus Y)$ are bijective for any closed subset $Y$ of codimension two or higher; this holds in our situation, because $U$ and $U\setminus Y$ contain the same prime divisors. Therefore:

\begin{proposition}
\label{prop:RSWD}
The sheaf $\OD$ of a Weil decoration $\WD\colon\CV\to\hatDiv(X)$ is reflexive. 
\end{proposition}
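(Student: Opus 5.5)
The proof will rest on Hartshorne's criterion \cite[1.6]{hartreflexiv}, quoted just before the statement: a coherent torsion-free sheaf $\CE$ on a normal variety is reflexive if and only if, for every open $U$ and every closed $Y\subseteq U$ of codimension at least two, the restriction $\CE(U)\to\CE(U\setminus Y)$ is bijective. So I will check three things for $\CE=\OD$: it is coherent, it is torsion-free, and it satisfies this extension property.

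Coherence holds by the very definition of a Weil decoration, and Proposition~\ref{prop:QCSPWD} gives quasi-coherence together with $\OD_\eta=\CV$. Torsion-freeness follows because $\OD(U)\subseteq\CV$ for every open $U$. Since $\CV$ is a $K$-vector space, no nonzero element is killed by a nonzero regular function $f$: multiplying by $f^{-1}\in K$ recovers it.

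For the extension property, restriction is simply the inclusion of subsets of $\CV$, so injectivity is automatic. For surjectivity, take $v\in\OD(U\setminus Y)$, meaning $\WD(v)_\PD\ge0$ for every prime divisor $\PD$ meeting $U\setminus Y$. A prime divisor $\PD$ meeting $U$ is not contained in $Y$, because $\PD\cap U$ has codimension one while $Y$ has codimension at least two. Hence its generic point lies in $U\setminus Y$, and therefore $U$ and $U\setminus Y$ contain exactly the same prime divisors. The condition $\WD(v)|_U\ge0$ then coincides with $\WD(v)|_{U\setminus Y}\ge0$, so $v\in\OD(U)$, and the restriction map is bijective.

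The only point needing care is matching the codimension-two hypothesis with the prime-divisor description of $\OD$. This works because $\OD$ is defined purely through coefficients at prime divisors, that is, at codimension-one points. Normality is used only through Hartshorne's criterion and the fact that $\ord_\PD$ is a discrete valuation.
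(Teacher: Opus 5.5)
Your proposal is correct and follows essentially the same route as the paper: torsion-freeness because $\OD$ sits inside $\CV$, then Hartshorne's criterion, using that $U$ and $U\setminus Y$ contain the same prime divisors. You additionally make explicit two points the paper leaves implicit: coherence, which is built into the definition of a Weil decoration, and injectivity of restriction, which holds because restriction is an inclusion inside $\CV$.
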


\subsection{The Weil decoration of a reflexive sheaf}
\label{subsec:WDRS}

So far, we associated with a Weil decoration $\WD$ a reflexive sheaf $\OD$; the converse will occupy us next. Let $\CE$ be a reflexive sheaf of rank $r$ over $X$ with generic stalk $\CE_\eta=\varinjlim_{U\not=\varnothing}\CE(U)\cong K^r$.
Since $\CE$ is torsion-free, we will always consider $\CE$ as an $\CO_X$-subsheaf of the constant sheaf induced by $\CE_\eta$. For $0\neq e\in\CE_\eta$ we define the rank one sheaf $\CE(e)$ by
\[
\fbox{$\CE(e)(U):=\big(K\cdot e\big)\cap\CE(U)\subseteq\CE_\eta$}
\]
on $U\subseteq X$ open. Since $\CE$ is reflexive and $\CE(e)$ is saturated, $\CE(e)$ is actually reflexive~\cite[II.1.1.16]{oss}, too. The isomorphic subsheaf
\[
\RSK(e):=\frac 1e\cdot\CE(e) 
\]
of $\KS$ resulting via
\[
\begin{tikzcd}
\CE(e)\ar[r,hookrightarrow]\ar[d,"\cdot 1/e"',"\cong"]&\KS\cdot e\ar[d,"\cdot 1/e","\cong"']\\
\RSK(e)\ar[r,hookrightarrow]&\KS
\end{tikzcd}
\]
induces a well-defined Weil divisor $D(e)$ with $\CO_X(D(e))=\RSK(e)$ and thus the map
\[
\WD_\CE\colon\CE_\eta\to\hatDiv(X),\quad 0\neq e\mapsto\WD_\CE(e):=D(e).
\]
Differently put, $e\neq0$ gives
\begin{equation}
\label{eq:WDRS}
\CO_X(\WD_\CE(e))\cdot e=\CK_\CE(e)\cdot e=\CE(e)=(\KS\cdot e)\cap\CE.
\end{equation}

\begin{proposition}
\label{prop:WDRS}
Let $e$, $e'\in\CE_\eta$ and $f\in K$. Then 
\begin{itemize}
\item[{\rm (i)}] $\WD_\CE(f\cdot e)=\div(f)+\WD_\CE(e)$;

\smallskip

\item[{\rm (ii)}] $\WD_\CE(e+e')\geq\WD_\CE(e)\wedge\WD_\CE(e')$;

\smallskip

\item[{\rm (iii)}] $\CO_X(\WD_\CE)=\CE$. 
\end{itemize}
In particular, $\WD_\CE$ defines a Weil decoration of rank $r$.
\end{proposition}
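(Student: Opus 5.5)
The plan is to work throughout with the characterisation \eqref{eq:WDRS}, namely $\CO_X(\WD_\CE(e))\cdot e=(K\cdot e)\cap\CE$. By the rank one correspondence, $\WD_\CE(e)$ is the unique divisor with this property. Stalkwise at a prime divisor $\PD$ this reads: for $f\in K$,
\[
f\cdot e\in\CE_\PD\iff\ord_\PD(f)+\WD_\CE(e)_\PD\geq0 .
\]
Here $\CE_\PD$ is a finitely generated torsion-free module over the DVR $\CO_{X,\PD}$, hence free. So $\WD_\CE(e)_\PD=\max\{n\in\Z\mid t^{-n}e\in\CE_\PD\}$, where $t$ is a uniformiser at $\PD$. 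We also need $\WD_\CE(e)_\PD$ to be finite and to vanish for almost all $\PD$. Finiteness holds because $\CE_\PD$ is a lattice in $\CE_\eta$. Almost-everywhere vanishing holds because on some open $U$ the sheaf $\CE|_U$ is free with a basis in which $e$ has regular coordinates, not all vanishing along any $\PD$ outside a proper closed subset.

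For (i): $(K\cdot fe)\cap\CE=(K\cdot e)\cap\CE$, and $\CO_X(\div(f)+D)\cdot fe=f^{-1}\CO_X(D)\cdot fe=\CO_X(D)\cdot e$. By uniqueness this gives $\WD_\CE(fe)=\div(f)+\WD_\CE(e)$; alternatively one reads it off directly from the stalk formula above. For (ii) we argue coefficientwise at each $\PD$. Set $n=\min\{\WD_\CE(e)_\PD,\WD_\CE(e')_\PD\}$. Then $t^{-n}e$ and $t^{-n}e'$ both lie in the module $\CE_\PD$, so their sum $t^{-n}(e+e')$ does too, whence $\WD_\CE(e+e')_\PD\geq n$. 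The case $e+e'=0$ is covered by the convention $\WD(0)=\infty$, and (W0) is immediate.

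For (iii), first note that $\CO_X(\WD_\CE)(U)=\{v\in\CE_\eta\mid\WD_\CE(v)|_U\geq0\}$. By the stalk formula, this is the set of $v$ with $v\in\CE_\PD$ for all $\PD\in U$. Clearly $\CE(U)$ is contained in this set. Conversely, $v\in\CE_\PD$ for every prime divisor of $U$ means that $v$ is a section of $\CE$ on an open $U'\subseteq U$ whose complement has codimension $\geq2$. I expect this step to be the main obstacle: one must check that the finitely many local extensions glue. This works because all sections are compared inside the constant sheaf $\CE_\eta$ and $\CE$ is coherent, so $v\in\CE(V)$ on some dense open $V$; membership in $\CE_\PD$ then extends $v$ across each of the finitely many divisors of $U\setminus V$ on small opens, and these extensions agree inside $\CE_\eta$. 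Reflexivity, via \cite[1.6]{hartreflexiv} as quoted before Proposition~\ref{prop:RSWD}, then gives $\CE(U')=\CE(U)$, hence $v\in\CE(U)$.

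Finally, (i), (ii) and (W0) make $\WD_\CE$ a pre-Weil decoration on $\CE_\eta\cong K^r$. By (iii) its associated sheaf is $\CE$, which is coherent, so $\WD_\CE$ is a Weil decoration of rank $r$.
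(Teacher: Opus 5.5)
Your proof is correct. For (i) and (ii) it is what the paper means by ``follows directly from the construction''. For (iii), however, you take a longer route than the paper.

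\emph{Your route.} You pass to codimension-one stalks and identify $\CO_X(\WD_\CE)(U)$ with $\bigcap_{\PD\in U}\CE_\PD\subseteq\CE_\eta$. You then invoke reflexivity of $\CE$, via Hartshorne's normality criterion plus a gluing argument, to get $\bigcap_{\PD\in U}\CE_\PD=\CE(U)$.

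\emph{The paper's route.} The paper uses \eqref{eq:WDRS} as an identity of sheaves, valid on sections over every open $U$, together with \eqref{eq:OD} for the pre-Weil decoration $\WD_\CE$. This gives $(K\cdot e)\cap\CE(U)=(K\cdot e)\cap\CO_X(\WD_\CE)(U)$ for all $e\neq0$. Since both sheaves are unions of their intersections with the lines $K\cdot e$, they coincide. Concretely, $v\in\CO_X(\WD_\CE)(U)$ iff $1\in\CO_X(\WD_\CE(v))(U)$ iff $v\in(K\cdot v)\cap\CE(U)$.

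So the step you flagged as the main obstacle disappears. Reflexivity was already spent in the construction: the saturated subsheaf $\CE(e)$ is reflexive of rank one, hence equals $\CO_X(D(e))\cdot e$ on every open, not just at codimension-one stalks.

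\emph{What your version buys.} It needs only the stalkwise form of \eqref{eq:WDRS}, so it does not depend on reflexivity of $\CE(e)$ for (iii). It shows exactly where reflexivity of $\CE$ itself enters, and yields $\CE(U)=\bigcap_{\PD\in U}\CE_\PD$ as a by-product. It also makes explicit the description $\WD_\CE(e)_\PD=\max\{n\mid t^{-n}e\in\CE_\PD\}$, which later arguments such as Proposition~\ref{prop:CompDe} and Proposition~\ref{prop:QuotWeilDecos} actually use.

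Your extra checks of finiteness and finite support of $\WD_\CE(e)$ are harmless but redundant, since $D(e)$ is a Weil divisor by construction.
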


\begin{proof}
This follows directly from the construction. For instance, \eqref{eq:WDRS} and \eqref{eq:OD}, namely $\CO_X(\WD(v))\cdot v=\KS\cdot v\cap\OD$, imply
\[
(\KS\cdot e)\cap\CE(U)=(\KS\cdot e)\cap\CO_X(\WD_\CE)(U)
\]
for all $e\neq0$ whence $\CE=\CO_X(\WD_\CE)$.
\end{proof}

\begin{proposition}
\label{prop:Bijection}
The maps $\sigma\colon\WD\mapsto\OD$ and $\tau\colon\CE\mapsto\WD_\CE$, which are defined on Weil decorations and reflexive sheaves on $X$, respectively, are mutually inverse. 
\end{proposition}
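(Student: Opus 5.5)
We have to verify the two compositions. One of them, $\sigma\circ\tau=\Id$, is already in hand. For a reflexive sheaf $\CE$, Proposition~\ref{prop:WDRS}(iii) gives $\CO_X(\WD_\CE)=\CE$ as subsheaves of the constant sheaf $\CE_\eta$. Hence $\sigma(\tau(\CE))=\CE$. The same proposition shows that $\WD_\CE$ is coherent, so $\tau$ really lands in Weil decorations. Conversely, $\sigma$ lands in reflexive sheaves by Proposition~\ref{prop:RSWD}.

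The substantive direction is $\tau\circ\sigma=\Id$. Let $\WD\colon\CV\to\hatDiv(X)$ be a Weil decoration and put $\CE:=\OD$. By Proposition~\ref{prop:QCSPWD} the generic stalk of $\CE$ is $\CE_\eta=\CV$, and $\CE$ sits inside $\CV$ as a subsheaf of the constant sheaf. So both $\WD$ and $\WD_\CE$ are maps on the same space $\CV$, and we must show $\WD_\CE(v)=\WD(v)$ for every $0\neq v\in\CV$.

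To do this, compare the two descriptions of the rank one saturation $(K\cdot v)\cap\CE$.
\begin{itemize}
\item By the defining identity~\eqref{eq:WDRS},
\[
\CO_X(\WD_\CE(v))\cdot v=(K\cdot v)\cap\CE .
\]
\item By~\eqref{eq:OD}, applied to the pre-Weil decoration $\WD$,
\[
\CO_X(\WD(v))\cdot v=(K\cdot v)\cap\OD=(K\cdot v)\cap\CE .
\]
\end{itemize}
Multiplying by $1/v$ gives the equality of subsheaves
\[
\CO_X(\WD_\CE(v))=\CO_X(\WD(v))\subseteq\KS .
\]

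It remains to see that a Weil divisor $D$ is determined by the subsheaf $\CO_X(D)\subseteq\KS$ itself, not just by its isomorphism class. This is the only point needing care. Take the stalk at a prime divisor $\PD$: by~\eqref{eq:DefOXD}, $\CO_X(D)_\PD=\{f\mid\ord_\PD(f)\geq -D_\PD\}$. This is the fractional ideal $\idm_{X,\PD}^{-D_\PD}$ of the DVR $\CO_{X,\PD}$, and it determines the integer $D_\PD$. Comparing these stalks for every $\PD$ yields $\WD_\CE(v)=\WD(v)$ coefficientwise.

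This completes $\tau\circ\sigma=\Id$, and together with the first paragraph the two maps are mutually inverse.
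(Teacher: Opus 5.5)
Your proof is correct and is essentially the paper's argument: both rest on Proposition~\ref{prop:WDRS}(iii) for $\sigma\circ\tau=\Id$, and on the fact that the stalk at a prime divisor $\PD$ determines the $\PD$-coefficient. The only difference is packaging. The paper deduces $\tau\circ\sigma=\Id$ from $\sigma\circ\tau=\Id$ plus injectivity of $\sigma$, which it checks via $\OD_\PD=\{v\mid\WD(v)_\PD\geq0\}$ and rescaling $v$ by a suitable $f\in K$. You instead compare the rank-one saturations directly via \eqref{eq:WDRS} and \eqref{eq:OD}, then read off $D_\PD$ from $\CO_X(D)_\PD$.
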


\begin{proof}
By Propositions~\ref{prop:RSWD} and~\ref{prop:WDRS}\,, the maps are well-defined. Furthermore, item (iii) of Proposition~\ref{prop:WDRS} implies that $\sigma\circ\tau$ is the identity on reflexive sheaves. It remains to prove $\WD=\WD_{\OD}$. By design,
\[
\WD_{\OD}=\tau(\OD)=\tau\circ\sigma(\WD).  
\]
Applying $\sigma$ yields $\sigma(\WD_{\OD})=\sigma(\WD)$, and we are left with showing injectivity of $\sigma$. Now $\OD=\CO_X(\WD')$ entails that $\WD(v)_\PD\geq0$ if and only if $\WD'(v)_\PD\geq0$. Replacing $v$ by a suitable $f\cdot v$, $f\in K$, yields $\WD=\WD'$.
\end{proof}

\section{Slices}
\label{sec:SliWeilDec}

\subsection{\texorpdfstring{$\PD$}{P}-orthogonal bases}

Let $\CE$ be a reflexive sheaf of rank $r$ on $X$. Since $\CO_{X,\PD}$ is a DVR for any prime divisor $\PD$, the module $\CE_\PD$ is free of rank $r$ .

\begin{proposition}
\label{prop:CompDe}
A set of vectors $\{e_1,\ldots,e_r\}$ in $\CE_\eta$ is $\PD$-orthogonal for $\WD_\CE$, cf.\ Definition~\ref{def:Orthogonal}, if and only if it defines an $\CO_{X,\PD}$-basis of $\CE_\PD$. 
\end{proposition}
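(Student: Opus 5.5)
The plan is to reduce everything to the local description of the sheaf at $\PD$. By Remark (i) after Proposition~\ref{prop:QCSPWD}, combined with Proposition~\ref{prop:WDRS}(iii), we have
\[
\CE_\PD=\{v\in\CE_\eta\mid\WD_\CE(v)_\PD\geq0\}.
\]
By (W1), $\WD_\CE(v)_\PD$ is therefore the largest $n\in\Z$ with $t^{-n}v\in\CE_\PD$, where $t$ is a uniformiser of $\CO_{X,\PD}$. Put differently, $|v|_\PD:=\WD_\CE(v)_\PD$ is the ``$t$-adic order'' of $v$ relative to the lattice $\CE_\PD$. I will verify this description first, using that $\CE_\PD$ is free of rank $r$ and spans $\CE_\eta$ over $K$, so that such an $n$ exists and is finite for $v\neq0$.

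For ``$\Leftarrow$'', suppose $e_1,\ldots,e_r$ is an $\CO_{X,\PD}$-basis of $\CE_\PD$. Write $v=\sum f_ie_i$ and let $m=\min_i\ord_\PD(f_i)$. Then $t^{-m}v=\sum (t^{-m}f_i)e_i$ has coefficients in $\CO_{X,\PD}$, so $t^{-m}v\in\CE_\PD$. On the other hand, $t^{-m-1}v\notin\CE_\PD$, because one of its coefficients has negative order and the $e_i$ form a basis. Hence $|v|_\PD=m$, which is exactly $\PD$-orthogonality.

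For ``$\Rightarrow$'', assume the $e_i$ are $\PD$-orthogonal. Taking $f_j=\delta_{ij}$ gives $|e_i|_\PD=0$, so $e_i\in\CE_\PD$. By Remark~\ref{rem:LinInd} the $e_i$ are $K$-linearly independent, hence a $K$-basis of $\CE_\eta$. Any $v\in\CE_\PD$ can then be written as $v=\sum f_ie_i$ with $f_i\in K$. Orthogonality gives $\min\ord_\PD(f_i)=|v|_\PD\geq0$, so all $f_i\in\CO_{X,\PD}$. Thus the $e_i$ generate $\CE_\PD$. They are also linearly independent over $\CO_{X,\PD}$, since they are so over $K$, and therefore form a basis.

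The only step requiring care is the identification $|v|_\PD=\max\{n\mid t^{-n}v\in\CE_\PD\}$, together with the fact that $\div(t)_\PD=1$. This follows from (W1) and Remark (i), since $\WD_\CE(t^{-n}v)_\PD=|v|_\PD-n$. Everything else is routine linear algebra over the DVR.
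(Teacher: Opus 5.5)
Your proof is correct and is essentially the paper's argument: the direction from $\PD$-orthogonality to an $\CO_{X,\PD}$-basis is the same (linear independence via Remark~\ref{rem:LinInd}, $\WD_\CE(e_i)_\PD=0$, and generation from $\min_i\ord_\PD(f_i)=\WD_\CE(v)_\PD\geq0$). For the converse, your description $\WD_\CE(v)_\PD=\max\{n\in\Z\mid t^{-n}v\in\CE_\PD\}$ is a repackaging of the paper's computation $\CO_X(\WD_\CE(e))_\PD=\{f\in K\mid f\cdot e\in\CE_\PD\}=\bigcap_i f_i^{-1}\CO_{X,\PD}=t^{-\min_i\ord_\PD(f_i)}\CO_{X,\PD}$.
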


\begin{proof}
For the implication, we note that a $\PD$-orthogonal set is $K$- and thus $\CO_{X,\PD}$-linearly independent by Remark~\ref{rem:LinInd}. Since $\{e_1,\ldots,e_r\}$ defines a $K$-basis of $\CE_\eta$, every $e\in\CE_\PD\subseteq\CE_\eta$ can be written as $e=\sum_{i=1}^nf_ie_i$ for $f_i\in K$. Then $\PD$-orthogonality implies $\min_i\{\ord_\PD(f_i)\}=\WD_\CE(e)_\PD\geq0$ whence $f_i\in\CO_{X,\PD}$. Moreover, $\WD_\CE(e_i)_\PD=0$ so that $e_1,\ldots,e_n\in\CE_\PD$. 

\smallskip

Conversely, pick $e=\sum_{i=1}^rf_ie_i\in\CE_\eta$ and $f\in K^*$. By design, $f\cdot e$ is in $\CE_\PD$ if and only if $f\cdot e$ is in $(\KS\cdot e)\cap\CE_\PD=\CO_X(\WD_\CE(e))_\PD\cdot e$. Further, $\CE_P=\bigoplus_{i=1}^r\CO_{X,P}\,e_i$ entails 
\begin{align*}
\CO_X(\WD_\CE(e))_\PD&=\{f\in K\mid f\cdot e\in\CE_\PD\}\\
&=\{f\in K\mid f\cdot f_i\in\CO_{X,\PD},\,i=1,\ldots,r\}=\bigcap_{i=1}^r\;f_i^{-1}\cdot\CO_{X,\PD}\subseteq K.
\end{align*}
Now fix a local parameter $t$ of $\CO_{X,\PD}$ and write $(f_i)=(t^{\ord_\PD(f_i)})$ for the fractional $\CO_{X,\PD}$-ideal in $K$ generated by $f_i$. Then
\[
\bigcap_{i=1}^r \;f_i^{-1}\cdot\CO_{X,\PD}=
t^{\max\{-\ord_\PD(f_i)\,\mid\, i=1,\ldots,r\}}\cdot\CO_{X,\PD}
=t^{-\min\{\ord_\PD(f_i)\,\mid\, i=1,\ldots,r\}}\cdot\CO_{X,\PD},
\]
that is, $\WD_\CE(e)_\PD=\min\{\ord_\PD(f_i)\mid i=1,\ldots,r\}$.
\end{proof}

\begin{example}
\label{exam:WDOmega}
Let $X$ be a curve. Any local parameter $t\in K(X)$ defines a $K$-basis $ \dd t$ of the rational $1$-forms $\Omega_{K/\kk}=\Omega_\eta$. If $\alpha=f\dd t$, then 
\[
\WD_{\Omega_X}(\alpha)_\PD=\ord_\PD(f)
\]
whenever $t$ defines a local parameter at $\PD$. In particular, $\WD_{\Omega_X}(\alpha)=\div(\alpha)$, the usual divisor of a rational one-form.
\end{example}

\begin{remark}
A $K$-basis $B=\{e_1,\ldots,e_r\}$ of $\CE_\eta$ induces an $\CO_{\sliceU_B}$-basis of $\CE|_{\sliceU_B}$ for a maximal nonempty open set $\sliceU_B\subseteq X$. Then $B$ is a $\PD$-orthogonal basis for $\WD_\CE$ if and only if $\PD\in \sliceU_B$. In particular, $B$ is $\sliceU_B$-orthogonal. 
\end{remark}

\begin{definition}
A {\em \slice\ $E$ of $\CE$} is $\kk$-vector space in $\CE_\eta$ such that $E\otimes_\kk K=\CE_\eta$. We call the restriction
\[
\WD_E\colon E\to\hatDiv(X),\quad\WD_E(e):=\WD_\CE(e)
\]
the {\em $E$-slice of $\WD_\CE$}. 
\end{definition}

\begin{remark}
\label{rem:Slices}
(i) Any $K$-basis $B$ of $\CE_\eta$ generates a \slice\ $E$ over $\kk$; conversely, any $\kk$-basis of a \slice\ $E$ yields a $K$-basis of $\CE_\eta$. Any two $\kk$-bases $B$, $B'$ in a given slice $E$ satisfy $\sliceU_B=\sliceU_{B'}$ and we therefore write $\sliceU_E$ instead of $\sliceU_B$ and $\sliceU_{B'}$. In particular,
\[
E\otimes_\kk\CO_{\sliceU_E}=\CE|_{\sliceU_E}\quad\text{and}\quad E\otimes_\kk\CO_{X,\PD}=\CE_\PD 
\]
for any $\PD\in\sliceU_E$. Furthermore, $\WD_E|_{\sliceU_E}\equiv0$, that is, the $E$-\slice\ of $\WD_\CE$ is {\em finitely supported} in the sense that $\WD_E$ takes values in 
\[
\Div(E):=\langle\PD_1,\ldots,\PD_m\rangle=\Z^m 
\]
for the finitely many prime divisors $\PD_i\in X\setminus\sliceU_E$.

\smallskip

(ii) As the reflexive sheaves of $\HM$-type to be discussed in Section~\ref{sec:HMtype} will illustrate, a sliced Weil decoration $\WD_E$ might satisfy $\WD_E|_U\equiv0$ without $E\otimes_\kk\CO_U=\CE|_U$, cf.\ Remark~\ref{rem:CanInc}.

\smallskip

(iii) Exactly as in \cite[Proposition 3.2]{TRS} one can show that the image of an $E$-\slice\ $\WD_E\colon E\to\hatDiv(E)\subseteq\hatDiv(X)$ is closed under $\wedge$ and has finite image (and not merely finite support).
\end{remark}

\subsection{The dual of a reflexive sheaf}
\label{subsec:DualRS}

As an application of slices we compute the Weil decoration of the dual $\CE^\vee$ of a reflexive sheaf $\CE$ in terms of $\WD_\CE$.

\begin{proposition}
\label{prop:Dual}
The Weil decoration $\WD_{\CE^\vee}\!\colon(\CE^\vee)_\eta=(\CE_\eta)^\vee\to\hatDiv(X)$ is given by
\begin{equation}
\label{eq:Wdual}
\WD_{\CE^\vee}(\varphi)=\bigwedge_{v\in\CE_\eta}\Big(\div\big(\varphi(v)\big)-\WD_\CE(v)\Big). 
\end{equation}
\end{proposition}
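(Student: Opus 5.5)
The plan is to prove \eqref{eq:Wdual} one prime divisor at a time, using $\PD$-orthogonal bases from Proposition~\ref{prop:CompDe}. Fix $\varphi\neq0$ in $(\CE_\eta)^\vee$ and a prime divisor $\PD$. First I note that the right-hand side is well defined: for each fixed $v\neq0$, the $\PD$-coefficient $\ord_\PD(\varphi(v))-\WD_\CE(v)_\PD$ does not change when $v$ is replaced by $f\cdot v$, by (W1). Hence the infimum over $v$ is really an infimum over the lines $K\cdot v$. Terms with $\varphi(v)=0$ contribute $+\infty$ and can be ignored.

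The second step is the local computation at $\PD$. The stalk $\CE_\PD$ is free over the DVR $\CO_{X,\PD}$, so I choose an $\CO_{X,\PD}$-basis $e_1,\ldots,e_r$ of $\CE_\PD$. By Proposition~\ref{prop:CompDe} this basis is $\PD$-orthogonal, so
\[
\WD_\CE\big(\textstyle\sum f_ie_i\big)_\PD=\min_i\ord_\PD(f_i).
\]
Since $\CE$ is coherent, $(\CE^\vee)_\PD=\Hom_{\CO_{X,\PD}}(\CE_\PD,\CO_{X,\PD})$, and it is free with the dual basis $e_1^*,\ldots,e_r^*$. Applying Proposition~\ref{prop:CompDe} to the reflexive sheaf $\CE^\vee$ then gives
\[
\WD_{\CE^\vee}(\varphi)_\PD=\min_i\ord_\PD(\varphi(e_i)),
\]
because $\varphi=\sum\varphi(e_i)e_i^*$.

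The third step shows that the right-hand side has the same $\PD$-coefficient. For $v=\sum f_ie_i$, the ultrametric inequality gives
\[
\ord_\PD(\varphi(v))\geq\min_i\big(\ord_\PD(f_i)+\ord_\PD(\varphi(e_i))\big)\geq\WD_\CE(v)_\PD+\min_i\ord_\PD(\varphi(e_i)),
\]
so every term is at least $\WD_{\CE^\vee}(\varphi)_\PD$. For the reverse inequality, take $v=e_j$ with $j$ attaining the minimum. Then $\WD_\CE(e_j)_\PD=0$ and the term equals $\ord_\PD(\varphi(e_j))$, so the minimum is attained.

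The main obstacle is that the meet in \eqref{eq:Wdual} runs over infinitely many $v$, so it must be read coefficientwise, and it must be checked that the result is an honest Weil divisor. The coefficientwise reading is justified by the computation above. Finiteness follows from Remark~\ref{rem:Slices}: choose a $K$-basis $B$ of $\CE_\eta$ with $\sliceU_B$ nonempty and such that $\varphi(b)\in\CO_X(\sliceU_B)^*$ for all $b\in B$, shrinking $\sliceU_B$ if necessary. Then for every $\PD\in\sliceU_B$ the basis $B$ can serve as the $e_i$, and the coefficient is $0$. Hence only finitely many $\PD$ contribute, and coefficientwise equality at every $\PD$ proves \eqref{eq:Wdual}.
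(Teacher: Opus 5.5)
Your proof is correct, but it takes a different route from the paper's.

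The paper works globally. It uses orthogonal bases only to show that the infinite meet $\underline D$ exists as a divisor; that estimate is essentially your lower bound $(\Delta_v)_\PD\geq\min_i\ord_\PD(\varphi(e_i))$. It then identifies $\underline D$ by a universal property: $D\leq\WD_{\CE^\vee}(\varphi)$ iff $\CO_X(D)\cdot\varphi\subseteq\CE^\vee$ iff $\varphi$ sends $\CO_X(D)\cdot\CO_X(\WD_\CE(v))\cdot v$ into $\CO_X$ for every $v$ (using $\CE=\bigcup_v\CO_X(\WD_\CE(v))\cdot v$) iff $D\leq\Delta_v$ for all $v$ iff $D\leq\underline D$.

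You work stalk by stalk instead. You identify $(\CE^\vee)_\PD$ with $\Hom_{\CO_{X,\PD}}(\CE_\PD,\CO_{X,\PD})$ and apply Proposition~\ref{prop:CompDe} to both $\CE$ and the reflexive sheaf $\CE^\vee$. In effect, the dual basis of a $\PD$-orthogonal basis is $\PD$-orthogonal for $\WD_{\CE^\vee}$. This gives sharper information: the explicit formula $\WD_{\CE^\vee}(\varphi)_\PD=\min_i\ord_\PD\varphi(e_i)$, and the fact that the infimum over $v$ is attained at a member of any $\PD$-orthogonal basis.

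It also makes existence of the meet automatic. You prove coefficientwise equality with $\WD_{\CE^\vee}(\varphi)$, which is already a Weil divisor, so your final finiteness paragraph is superfluous. As written, that paragraph would also need a basis with all $\varphi(b)\neq0$ to be arranged explicitly.

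The paper's route avoids commuting $\Hom$-sheaves with localisation and the dual-basis bookkeeping. The price is a separate argument that the infinite meet is a well-defined divisor.
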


\begin{proof}
Let $\Delta_v$ be shorthand for the divisor $\div\big(\varphi(v)\big)-\WD_\CE(v)$. To start with,
\[
\underline D:=\bigwedge_{v\in\CE_\eta}\Delta_v=\bigwedge_{v\in\CE_\eta}\{\div\big(\varphi(v)\big)-\WD_\CE(v)\}
\]
is a well-defined divisor for $\varphi\in(\CE_\eta)^\vee$: Indeed, fix a slice $E$ with basis $e_1,\ldots,e_r$. If $\PD\in\sliceU_E$ and $v=\sum_{i=1}^rf_ie_i$, $f_i\in K$, (W1) and orthogonality over $\sliceU_E$ imply
\begin{align*}
(\Delta_v)_\PD
&\;\geq\;\min\limits_{i=1,\ldots,r}\big\{\div\!\big(f_i\varphi(e_i)\big)_\PD\big\}-\WD_\CE\big(\sum_{i=1}^rf_ie_i\big)_\PD\\
&\;=\;\min\limits_{i=1,\ldots,r}\{\div(f_i)_\PD+\div(\varphi(e_i))_\PD\}-\min\limits_{i=1,\ldots,r}\{\div(f_i)_\PD\}\\
&\;\geq\;\hspace{6pt}\min\{\div\big(\varphi(e_i)\big)_\PD\}.
\end{align*}
If $\PD_k$ is one of the finitely many prime divisors not in $\sliceU_E$ we fix a $\PD_k$-orthogonal basis $e_{k,1},\ldots,e_{k,r}$, and conclude as before that
\[
(\Delta_v)_{\PD_k}\geq\min\limits_{i=1,\ldots,r}\{\div\!\big(\varphi(e_{k,i})\big)_{\PD_k}\}.
\]
In particular, $\underline D_\PD\geq0$ for all but finitely many prime divisors. Thus, $\underline D$ is finitely supported, for $\underline D\leq\Delta_v$ if $v\in\CE_\eta$. Turning to the proof of~\eqref{eq:Wdual}, we first observe
\[
D\leq\WD_{\CE^\vee}(\varphi)\quad\text{if and only if}\quad\CO_X(D)\cdot\varphi\subseteq\CO_X(\WD_{\CE^\vee}(\varphi))\cdot\varphi=\KS\cdot\varphi\cap\CE^\vee
\]
as follows from~\eqref{eq:OD}. Since $\CE=\bigcup_{v\in\CE_\eta}\CO_X(\WD_\CE(v))\cdot v$, evaluating the right hand side in $\CE$ is equivalent to the following statement: For all $v\in\CE_\eta$, $f\in\CO_X(D)$ and $g\in\CO_X(\WD_\CE(v))$, we have $f\cdot\varphi(g\cdot v)\in\CO_X$, that is, $0\leq\div(f)+\div(g)+\div(\varphi(v))$. This, in turn, is equivalent to 
\[
0\leq-D-\WD_\CE(v)+\div(\varphi(v))=-D+\Delta_v
\]
for all $v\in\CE_\eta$. Therefore,
\[
D\leq\WD_{\CE^\vee}(\varphi)\quad\text{if and only if}\quad D\leq\underline D;
\]
in particular, $\underline D=\WD_{\CE^\vee}(\varphi)$. 
\end{proof}

\subsection{Toric \slice s}
\label{subsec:TS}
Slices also naturally appear for Weil decorations of toric reflexive sheaves, cf.~\cite{TRS}. First, we briefly fix our notation for present and later use. Let $X=\toric(\Sigma)$ be the toric variety over $\kk$ which is specified by a fan $\Sigma$; $\TT$ denotes the torus of $X$. The {\em character lattice} is given by algebraic group morphisms
\[
M=\Hom_{\opn{ag}}(\TT,\kk^*). 
\]
It induces the $\kk$-algebra $\kk[M]$ for which $\Spec\kk[M]=\TT$. For technical reasons, we discard some degenerate cases and {\em always} assume that the set of one-dimensional cones or {\em rays} $\Sigma(1)$ generates $N_\R=N\otimes_\Z\R$, where $N=\Hom_\Z(M,\Z)$ is the dual of $M$ providing a pairing 
$M\times N\to\Z$, $(m,n)\mapsto \MN{m}{n}=n(m)$.

\begin{example}
\label{exam:PnToric}
For $X=\PP^n$ we have $M=\Z^n$, and the fan is generated by the rays
\[
a_0=-\sum_{i=1}^ne_i,\;a_1=e_1,\;\ldots,\;a_n=e_n 
\]
for the standard basis $e_1,\ldots,e_n$ of $\Z^n$.
\end{example}

The fundamental sequence of toric geometry reads as
\begin{equation}
\label{eq:FundSeq}
\xymatrix{
0\ar[r]&M\ar[r]^-{\iota}&\Div_\TT(X)=\bigoplus_{\rho\in\Sigma(1)}\Z D_\rho\ar[r]^-{[\cdot]}&\Cl(X)\ar[r]&0.
}
\end{equation}
Here, $\Div_\TT(X)$ denotes the group of $\TT$-invariant Weil divisors freely generated by $\{D_\rho:=\overline{\opn{orb}(\rho)}\}_{\rho\in\Sigma(1)}$, the closures in $X$ of the $\TT$-orbits $\opn{orb}(\rho)$ corresponding to $\rho$. An element $m$ in $M$ is mapped to $\iota(m):=\sum_{\rho\in\Sigma(1)}
\MN{m}{\rho}D_\rho$ which equals the divisor of the rational function $x^m$ defined by $m$, while $[\cdot]$ sends a toric divisor to its class. As usual, we shall identify a ray with its primitive generator. A general reference for toric varieties is~\cite{CoxBook}. 

\medskip

Now let $\CE$ be a toric sheaf on $X$, that is, a reflexive sheaf $\CE$ with a linearised $\TT$-action. In particular, $\CE$ is already determined by the $M$-graded modules of sections $\CE(U_\sigma)$ over the torus invariant open affines $U_\sigma=\toric(\sigma)$, $\sigma\in\Sigma$. Taking 
\begin{equation}
\label{eq:EDef}
E:=\CE(\TT)_0=\Gamma(\TT,\CE)_0\cong\kk^r
\end{equation}
to be the $\kk$-vector space of $M$-degree $0$, that is, the torus invariant sections of $\CE$ over $\TT$, we see that $\CE(U_\sigma)$ sits naturally inside $\kk[M]\otimes_\kk E$. Further, $E$ defines a slice for $\WD_\CE$ to which we refer as {\em toric}. Since by equivariance, any $\kk$-basis $e_1,\ldots,e_r$ of $E$ trivialises $\CE$ over the open torus $\TT\subseteq X$, we have $\TT\subseteq \sliceU_E$; in particular, the $E$-slice of $\WD_\CE$ is supported on $\Div_\TT(X)$. Since for $0\neq e\in E$, 
\begin{equation}
\label{eq:MultiGrad}
\CO_X(\WD_E(e))(U_\sigma)\cdot e=(\KS\cdot e)\cap\CE(U_\sigma)=\big(\kk[M]\cdot e\big)\cap\CE(U_\sigma),
\end{equation}
$\WD_E$ is actually the Weil decoration of the toric sheaf $\CE$ in the sense of~\cite{TRS}; as such, it determines $\CE$ and therefore $\WD_\CE$. To see this explicitly, let us write
\[
\WD_E(e)=\sum_{\rho\in\Sigma(1)}b_\rho(e)D_\rho
\]
for $e\neq0$. Then $x^m\otimes e$ is in $\CE(U_\rho)\subseteq\kk[M]\otimes_\kk E$ if and only if $\MN{m}{\rho}\geq-b_\rho(e)$. For $\rho\in\Sigma(1)$ and $\ell\in\Z$, we recover the descending {\em Klyachko-filtration} $E^\ell_\rho:=\{e\in E\mid b_\rho(e)\geq\ell\}$~\cite{klyachko} for which we take a {\em $\rho$-adapted basis} $e_1,\ldots,e_r$ of $E$, that is, a basis compatible with the flag $E^\bullet_\rho$. If $m_i\in M$ is such that $\MN{m_i}{\rho}=-b_\rho(e_i)$, then
\[
\hat e_i:=x^{m_i}\cdot e_i\in\CE(U_\rho)  
\]
defines an $U_\rho$-orthogonal basis $\hat e_1,\ldots,\hat e_r$ of $\WD_\CE$. As $D_\rho\in U_\rho$, 
\begin{align*}
\WD_\CE\big(\sum f_ie_i\big)_{D_\rho}=\WD_\CE\big(\sum f_ix^{-m_i}\hat e_i\big)_\rho&=\min\{\ord_{D_\rho}(f_i)-\MN{m_i}{\rho}\mid i=1,\ldots,r\}\\
&=\min\{\ord_{D_\rho}(f_i)+b_\rho(e_i)\mid i=1,\ldots,r\}
\end{align*}
by Proposition~\ref{prop:CompDe}. We thus arrived at

\begin{proposition}
\label{prop:ToricSlicing}
Let $\CE$ be a toric sheaf over the toric variety $X=\toric(\Sigma)$ with toric slice $E$. Then $\TT\subseteq\sliceU_E$, and for a $\rho$-adapted basis $e_1,\ldots,e_r$ of $E$, $\rho\in\Sigma(1)$,
\[
\WD_\CE\big(\sum f_ie^i\big)_{D_\rho}=\min\{\ord_{D_\rho}(f_i)+\WD_E(e_i)_{D_\rho}\mid i=1,\ldots,r\}
\]
for all $f_1,\ldots,f_r\in K$.
\end{proposition}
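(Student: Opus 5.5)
The claim that $\TT\subseteq\sliceU_E$ comes from equivariance. Any $\kk$-basis $e_1,\ldots,e_r$ of the degree-zero part $E=\Gamma(\TT,\CE)_0$ generates $\Gamma(\TT,\CE)$ as a $\kk[M]$-module, since $\CE|_\TT$ is a $\TT$-linearised reflexive, hence free, module over $\kk[M]$ and is generated in degree $0$. So the basis trivialises $\CE|_\TT$, and by the remark on maximal trivialising opens, $\TT\subseteq\sliceU_E$.

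For the formula we fix $\rho\in\Sigma(1)$ and work on $U_\rho$, which contains the generic point of $D_\rho$. We want to apply Proposition~\ref{prop:CompDe}, so we aim to produce a $D_\rho$-orthogonal basis, that is, an $\CO_{X,D_\rho}$-basis of $\CE_{D_\rho}$. Write $b_\rho(e)=\WD_E(e)_{D_\rho}$ and use the Klyachko filtration $E^\ell_\rho=\{e\in E\mid b_\rho(e)\geq\ell\}$. The ideal property of valuations (W2) together with (W1) for scalars in $\kk^*$ show that this is a descending filtration by subspaces, so a $\rho$-adapted basis $e_1,\ldots,e_r$ exists. Choose $m_i\in M$ with $\MN{m_i}{\rho}=-b_\rho(e_i)$, which is possible since $\rho$ is primitive, and set $\hat e_i=x^{m_i}e_i$.

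The key step is to show that $\hat e_1,\ldots,\hat e_r$ is an $\CO_{U_\rho}$-basis of $\CE|_{U_\rho}$, or at least an $\CO_{X,D_\rho}$-basis of the stalk. From \eqref{eq:MultiGrad}, the degree-$m$ part of $\CE(U_\rho)$ inside $\kk[M]\otimes E$ is $x^m\otimes E^{-\MN{m}{\rho}}_\rho$, because $\CE(U_\rho)$ is $M$-graded. Adaptedness then gives
\[
\CE(U_\rho)_m=\bigoplus_{i:\,b_\rho(e_i)\geq-\MN{m}{\rho}}\kk\, x^m e_i=\bigoplus_i \kk[\rho^\vee\cap M]_{m-m_i}\,\hat e_i .
\]
Summing over $m$ yields $\CE(U_\rho)=\bigoplus_i\kk[\rho^\vee\cap M]\hat e_i$, and in particular $\CE_{D_\rho}=\bigoplus\CO_{X,D_\rho}\hat e_i$. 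This graded bookkeeping is the main obstacle. The delicate point is that the degree-$m$ piece is exactly the span of the adapted basis vectors in the filtration step, and this needs the filtration to be compatible with the basis, not merely to exist. Note also that $\rho^\vee$ here is a half-space, so membership depends only on $\MN{\cdot}{\rho}$.

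Proposition~\ref{prop:CompDe} now gives $\WD_\CE(\sum g_i\hat e_i)_{D_\rho}=\min_i\ord_{D_\rho}(g_i)$. Substitute $g_i=f_ix^{-m_i}$ and use $\ord_{D_\rho}(x^{-m_i})=-\MN{m_i}{\rho}=b_\rho(e_i)=\WD_E(e_i)_{D_\rho}$. This gives the stated formula.
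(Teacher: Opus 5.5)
Your proposal is correct and follows the paper's argument: an equivariant trivialisation over $\TT$, the Klyachko filtration with a $\rho$-adapted basis, the twisted basis $\hat e_i=x^{m_i}e_i$, and then Proposition~\ref{prop:CompDe}. Your degree-by-degree computation of $\CE(U_\rho)$ actually supplies the verification that $\hat e_1,\ldots,\hat e_r$ is a basis over $U_\rho$, which the paper only asserts. One small correction: $\Gamma(\TT,\CE)\cong\kk[M]\otimes_\kk E$ holds because of the $M$-grading (multiplication by $x^m$ identifies the degree-$0$ and degree-$m$ parts), not because of reflexivity, which by itself would not force freeness over $\kk[M]$.
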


\section{Morphisms of Weil decorations}
\label{sec:MorphWeilDecos}

\subsection{The category of Weil decorations}
\label{subsec:CatWeilDecos}
Our notion of morphism is this.

\begin{definition}
Let $\WD\colon\CV\to\hatDiv(X)$ and $\WD'\colon\CV'\to\hatDiv(X)$ be two Weil decorations. A {\em morphism $\mu\colon\WD\to\WD'$ between two Weil decorations $\WD$ and $\WD'$} is a $K$-linear map $\CV\to\CV'$ still denoted $\mu$ such that for all $v\in\CV$,
\[
\WD(v)\leq\WD'(\mu(v)).
\] 
\end{definition}

A morphism $\mu\colon\WD\to\WD'$ induces an $\CO_X$-module morphism $\OD\to\CO_X(\WD')$. This boosts the assignment from Proposition~\ref{prop:Bijection} into a functor
\[
F\colon\mathbf{WeilDeco}_X\to\mathbf{RefShe}_X,\quad\WD\mapsto\OD
\]
from the category of Weil decorations $\mathbf{WeilDeco}_X$ into the category $\mathbf{RefShe}_X$ of reflexive sheaves on $X$. Conversely, let $\CE$ and $\CE'$ be two reflexive sheaves on $X$. Each sheaf map $\mu\colon\CE\to\CE'$ induces a $K$-linear map $\mu_\eta\colon\CE_\eta\to\CE'_\eta$ between the generic stalks. Torsion-freeness makes the vertical maps in the commutative diagram 
\[
\xymatrix@!0@C=7em@R=7ex
{
\CE(U)\ar^-{\mu_U}[r]\ar@{^(->}[d] & 
\CE'(U)\ar@{^(->}[d]\\
\CE_\eta\ar^-{\mu_\eta}[r] & \CE'_\eta
}
\]
injective. Hence, we can reconstruct the sheaf map $\mu$ from $\mu_\eta$ alone by restricting to $\CE(U)$ to get a morphism $\WD_\CE\to\WD_{\CE'}$ of Weil decorations. This defines the functor  
\[
G\colon\mathbf{RefShe}_X\to\mathbf{WeilDeco}_X,\quad\CE\mapsto\WD_\CE,\quad G(\mu\colon\CE\to\CE'):=\big[\mu_\eta\colon\CE_\eta\to\CE'_\eta\big].
\]
Clearly, $F\circ G$ and $G\circ F$ are isomorphic to the identity functors on $\mathbf{RefShe}_X$ and $\mathbf{WeilDeco}_X$, respectively, so that we can upgrade Proposition~\ref{prop:Bijection} to

\begin{theorem}
\label{thm:EquivalenceCat}
The categories of $\mathbf{WeilDeco}_X$ and $\mathbf{RefShe}_X$ are equivalent.
\end{theorem}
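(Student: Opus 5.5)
The plan is to show that $F$ and $G$ are well-defined functors and that both composites are naturally isomorphic to the identity. On objects this is Proposition~\ref{prop:Bijection}; in fact $G\circ F=\mathrm{id}$ and $F\circ G=\mathrm{id}$ hold on the nose, since $\CO_X(\WD_\CE)=\CE$ as subsheaves of the constant sheaf $\CE_\eta$, and $\WD_{\OD}=\WD$. What remains is the behaviour on morphisms.

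\textbf{Step 1: $F$ is well defined on morphisms.} Let $\mu\colon\WD\to\WD'$ be a morphism and let $v\in\OD(U)$, so $\WD(v)|_U\geq0$. The inequality $\WD(v)\leq\WD'(\mu(v))$ then gives $\WD'(\mu(v))|_U\geq0$, so $\mu(v)\in\CO_X(\WD')(U)$. Since $\mu$ is $K$-linear, its restriction is $\CO_X(U)$-linear and compatible with the restriction maps (which are inclusions). Functoriality is clear because composition is just composition of $K$-linear maps.

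\textbf{Step 2: $G$ is well defined on morphisms.} Given a sheaf map $\mu\colon\CE\to\CE'$, I must check that $\WD_\CE(e)\leq\WD_{\CE'}(\mu_\eta(e))$ for $e\neq0$. If $\mu_\eta(e)=0$ the right side is $\infty$ and there is nothing to show. Otherwise, by \eqref{eq:WDRS}, for any $f\in\CO_X(\WD_\CE(e))(U)$ we have $fe\in\CE(U)$. Hence $f\mu_\eta(e)=\mu_U(fe)\in(K\cdot\mu_\eta(e))\cap\CE'(U)$, that is, $f\in\CO_X(\WD_{\CE'}(\mu_\eta(e)))(U)$. This shows $\CO_X(D)\subseteq\CO_X(D')$, and I conclude $D\leq D'$ coefficientwise, for instance by checking stalks at each prime divisor $\PD$: the inclusion $t^{-D_\PD}\CO_{X,\PD}\subseteq t^{-D'_\PD}\CO_{X,\PD}$ forces $D_\PD\leq D'_\PD$.

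\textbf{Step 3: the composites on morphisms.} $G\circ F(\mu)=\mu$ holds because the generic stalk of $\OD$ is $\CV$ and the induced map on generic stalks is $\mu$ itself. For $F\circ G(\mu)=\mu$, a sheaf map is recovered from $\mu_\eta$ by restriction, using the injectivity of $\CE(U)\hookrightarrow\CE_\eta$ from torsion-freeness, as in the commutative diagram. The two functors are therefore mutually inverse isomorphisms of categories, and in particular equivalences.

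The only mildly delicate point is the inequality in Step 2, namely translating the inclusion of rank one sheaves into an inequality of divisors. This is handled by the local computation at each DVR $\CO_{X,\PD}$, exactly as in Proposition~\ref{prop:CompDe}.
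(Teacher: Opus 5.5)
Your proposal is correct and follows the paper's route: $F$ and $G$ are defined exactly as there, objects are handled by Proposition~\ref{prop:Bijection}, and morphisms are recovered from their generic stalks via torsion-freeness. You additionally verify $\WD_{\CE}(e)\leq\WD_{\CE'}(\mu_\eta(e))$, which the paper leaves implicit, and your ``on the nose'' identities hold under the paper's convention of viewing $\CE$ inside the constant sheaf $\CE_\eta$; without that convention they are the canonical natural isomorphisms the paper states, which still give an equivalence.
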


\subsection{Kernels}
\label{subsec:KerWeilDecos}
For two reflexive sheaves $\CE'\subseteq\CE$ we have $\CE'_\eta\subseteq\CE_\eta$ as $K$-vector spaces whence $\WD_{\CE'}(v)\leq\WD_\CE(v)$ for $v\in\CE'_\eta$. Inequality can indeed occur, e.g., $\CO_X(-D)\subseteq\CO_X$ for any effective divisor $D$.

\begin{proposition}
\label{prop:KerWeilDecos}
Let $\CE'\subseteq\CE$ be reflexive sheaves with Weil decorations $\WD'$ and $\WD$, respectively. Let $\CE'':=\CE/\CE'$ be the cokernel and $\PD\in X$ be a prime divisor. Then
\begin{equation}
\label{eq:InjStalk}
\CE''_\PD\to\CE''_\eta\text{ is injective}\iff\WD'(v)_\PD=\WD(v)_\PD\text{ for all }v\in\CE'_\eta.
\end{equation}
In particular, $\WD'=\WD|_{\CE'_\eta}$ if $\CE'$ is the kernel of a morphism $\CE\to\CE''$ with $\CE''$ torsion-free.
\end{proposition}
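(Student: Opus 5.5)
The plan is to work entirely at the stalk over $\PD$, where $\CO_{X,\PD}$ is a DVR and everything reduces to linear algebra. The first step is to observe that the kernel of $\CE''_\PD\to\CE''_\eta$ is the torsion of $\CE''_\PD$. Indeed, localising the exact sequence $0\to\CE'\to\CE\to\CE''\to0$ gives $0\to\CE'_\PD\to\CE_\PD\to\CE''_\PD\to0$ and $0\to\CE'_\eta\to\CE_\eta\to\CE''_\eta\to0$, and $\CE''_\eta=\CE''_\PD\otimes K$. Since $\CE'_\eta\subseteq\CE_\eta$, this kernel is identified with
\[
\big(\CE_\PD\cap\CE'_\eta\big)/\CE'_\PD .
\]
So injectivity is equivalent to the saturation condition $\CE_\PD\cap\CE'_\eta=\CE'_\PD$ inside $\CE_\eta$.

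The second step translates this into Weil decorations using Remark (i) after Proposition~\ref{prop:QCSPWD} together with Proposition~\ref{prop:WDRS}\,(iii). These give $\CE_\PD=\{v\in\CE_\eta\mid\WD(v)_\PD\geq0\}$ and $\CE'_\PD=\{v\in\CE'_\eta\mid\WD'(v)_\PD\geq0\}$. Hence the saturation condition reads: for $v\in\CE'_\eta$,
\[
\WD(v)_\PD\geq0\iff\WD'(v)_\PD\geq0 .
\]
The inequality $\WD'\leq\WD$ on $\CE'_\eta$, noted just before the proposition, makes one implication automatic. If the coefficients agree, the condition clearly holds. Conversely, suppose the condition holds for all $v\in\CE'_\eta$. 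I then rescale, as in the proof of Proposition~\ref{prop:Bijection}: given $v$, pick $f\in K$ with $\ord_\PD(f)=-\WD(v)_\PD$ (for example a power of a local parameter), so $\WD(fv)_\PD=0$ by (W1). The condition then gives $\WD'(fv)_\PD\geq0$, whence $\WD'(v)_\PD\geq\WD(v)_\PD$, and equality follows. This proves~\eqref{eq:InjStalk}.

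For the ``in particular'', suppose $\CE'=\ker(\CE\to\CE'')$ with $\CE''$ torsion-free. Then $\CE/\CE'$ embeds into $\CE''$ and is therefore torsion-free. For a torsion-free sheaf on an integral scheme, every stalk injects into the generic stalk. So the left-hand side of~\eqref{eq:InjStalk} holds at every prime divisor $\PD$, giving $\WD'=\WD|_{\CE'_\eta}$ coefficientwise.

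The only step needing care is the first identification. One must check that the kernel of $\CE''_\PD\to\CE''_\eta$ really is $(\CE_\PD\cap\CE'_\eta)/\CE'_\PD$. This is a short diagram chase using exactness of localisation and $\CE'_\eta=\CE'_\PD\otimes K$. An element of $\CE_\PD$ maps to zero in $\CE''_\eta$ exactly when it lies in $\CE'_\eta$, and it maps to zero in $\CE''_\PD$ exactly when it lies in $\CE'_\PD$. Beyond that, everything is routine.
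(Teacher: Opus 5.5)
Your proof is correct and follows essentially the same route as the paper: a diagram chase reducing injectivity of $\CE''_\PD\to\CE''_\eta$ to the saturation condition $\CE'_\eta\cap\CE_\PD=\CE'_\PD$, which is then translated into equality of $\PD$-coefficients (you via rescaling and the description $\CE_\PD=\{v\mid\WD(v)_\PD\geq0\}$, the paper via $K\cdot v\cap\CE'_\PD=K\cdot v\cap\CE_\PD$). For the ``in particular'' part, you note that $\CE/\CE'$ embeds into the torsion-free target and is therefore torsion-free; this is slightly more careful than the paper, which tacitly identifies the target with the cokernel.
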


\begin{proof}
If $\CE'=\ker(\CE\to\CE'')$ with $\CE''$ torsion-free, then $\CE''_x\hookrightarrow\CE''_\eta$ is injective for {\em all} points $x\in X$. The second statement is thus a direct implication of the equivalence in~\eqref{eq:InjStalk}. To prove the latter we let $0\to\CE'\to\CE\stackrel{\mu}{\to}\CE''\to0$ be a short exact-sequence of $\CO_X$-modules with $\CE'$ and $\CE$ reflexive. An arbitrary prime divisor $\PD$ of $X$ yields the commutative diagram
\[
\xymatrix@!0@C=5em@R=7ex{
0\ar[r]&\CE'_\PD\ar[r]\ar@{^(->}[d]&\CE_\PD\ar^-{\mu_\PD}[r]\ar@{^(->}[d]&\CE''_\PD\ar[d]^-{\nu_\PD}\ar[r]&0\\
0\ar[r]&\CE'_\eta\ar[r]&\CE_\eta\ar^-{\mu_\eta}[r]&\CE''_\eta\ar[r]&0
}
\]
with exact rows and two injective vertical maps. A standard diagram chase reveals \[
\nu_\PD\;\text{is injective}\iff\CE'_\eta\cap\CE_\PD=\CE'_\PD,
\]
where intersection takes place in $\CE_\eta$. Assuming that $\nu_\PD\colon\CE''_\PD\to\CE''_\eta$ is injective,
\[
K\cdot v\cap\CE'_\PD=K\cdot v\cap(\CE'_\eta\cap\CE_P)=K\cdot v\cap\CE_P  
\]
for all $v\in\CE'_\eta$ whence $\WD'(v)_\PD=\WD(v)_\PD$. Conversely, $\CE'_\eta\cap\CE_\PD\subsetneq\CE'_\PD$ implies that there exists $v\in\CE'_\eta\cap\CE_\PD$ with $\WD(v)_\PD\geq0$, but $\WD'(v)_\PD<0$.
\end{proof}

\begin{remark}
Injectivity of all maps $\CE''_\PD\to\CE''_\eta$ is {\em not} enough to guarantee torsionfreeness of $\CE_\eta$. For instance, take an integral domain $A$ and a non-prime ideal $I$ not contained in any height one prime. Then $A/I$ has torsion, but $(A/I)_\PD=0$ for any height one prime $\PD$. 
\end{remark}

\subsection{Quotients}
\label{subsec:QuotWeilDecos}
Next we turn to quotients.

\begin{proposition}
\label{prop:QuotWeilDecos}
Let $\mu\colon\CE\to\CE'$ be a morphism between reflexive $\CO_X$-modules with Weil decorations $\WD$ and $\WD'$, and let $\PD\in X$ be a prime divisor. Then
\begin{center}
$\mu_\PD\colon\CE_\PD\surj\CE'_\PD$ is surjective $\iff$ 
\begin{tabular}[t]{l}
for all $e'\in\CE'_\eta$, the $\PD$-coefficient of $\WD'(e')$ is\\[2pt]
$\WD'(e')_\PD=\max\{\WD(e)_\PD\mid e\in\mu_\eta^{-1}(e')\subseteq\CE_\eta\}$.
\end{tabular} 
\end{center}
In particular, if $\mu\colon\CE\to\CE'$ is surjective, then $\WD'(e')=\bigvee\limits_{e\in\mu_\eta^{-1}(e')}\WD\big(e\big)$.
\end{proposition}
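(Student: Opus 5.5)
The proof works entirely at the DVR $\CO_{X,\PD}$. By Proposition~\ref{prop:CompDe} (equivalently, by Remark~(i) after Proposition~\ref{prop:QCSPWD} together with~\eqref{eq:WDRS}), for $0\neq e\in\CE_\eta$ we have
\[
\WD(e)_\PD=\max\{k\in\Z\mid t^{-k}e\in\CE_\PD\},
\]
where $t$ is a local parameter at $\PD$; similarly for $\WD'$. The inequality ``$\leq$'' in the max-formula holds without any surjectivity assumption, because a morphism of reflexive sheaves is a morphism of Weil decorations (Theorem~\ref{thm:EquivalenceCat}). Indeed, for $e\in\mu_\eta^{-1}(e')$ we get $\WD(e)\leq\WD'(\mu_\eta(e))=\WD'(e')$. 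So the maximum, if attained, is at most $\WD'(e')_\PD$. The content is therefore that the maximum is attained exactly when $\mu_\PD$ is surjective.

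For ``$\Rightarrow$'', fix $e'\neq0$ and put $k:=\WD'(e')_\PD$, so $t^{-k}e'\in\CE'_\PD$. Surjectivity of $\mu_\PD$ gives $\tilde e\in\CE_\PD$ with $\mu_\PD(\tilde e)=t^{-k}e'$. Then $e:=t^k\tilde e$ lies in $\mu_\eta^{-1}(e')$, using that $\mu_\eta$ restricts to $\mu_\PD$ by the commutative diagram in Subsection~\ref{subsec:CatWeilDecos}. Moreover $\WD(e)_\PD=k+\WD(\tilde e)_\PD\geq k$ by (W1). Combined with ``$\leq$'' this shows the maximum exists and equals $k$.

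For ``$\Leftarrow$'', let $e'\in\CE'_\PD$ be nonzero, so $\WD'(e')_\PD\geq0$. The hypothesis gives $e\in\CE_\eta$ with $\mu_\eta(e)=e'$ and $\WD(e)_\PD=\WD'(e')_\PD\geq0$, hence $e\in\CE_\PD$ and $\mu_\PD(e)=e'$. This proves surjectivity. Care is needed in one respect: the formula must be read as saying the maximum is \emph{attained}, with the preimage nonempty. If $\mu_\eta$ is not surjective, $\mu_\eta^{-1}(e')=\varnothing$ for some $e'$, and then both sides fail consistently, since surjectivity at $\PD$ forces surjectivity at $\eta$ by localising.

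For the final claim, surjectivity of $\mu$ gives surjectivity of every stalk map $\mu_\PD$. Applying the equivalence at each prime divisor yields the coefficientwise equality $\WD'(e')=\bigvee_{e}\WD(e)$, where the join is taken coefficientwise as a maximum that is attained, though possibly by a different $e$ for each $\PD$. The only subtle point in the whole argument is this bookkeeping about attainment and the non-emptiness of fibres; the rest is the routine DVR manipulation above.
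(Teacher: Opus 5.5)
Your proof is correct and follows the paper's argument essentially step by step. The inequality $\WD(e)\leq\WD'(\mu_\eta(e))$ gives the upper bound, lifting $t^{-k}e'$ through $\mu_\PD$ and rescaling by $t^k$ shows the maximum is attained, and for the converse the maximiser for $e'\in\CE'_\PD$ lies in $\CE_\PD$. Your extra remarks about non-empty fibres and attainment are accurate bookkeeping that the paper leaves implicit.
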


\begin{proof}
Let $e'\in\CE'_\eta$ and assume first that for a prime divisor $\PD$ in $X$, $\mu_\PD$ is surjective. For any $e\in\mu^{-1}(e')$ we have $\WD(e)_\PD\leq\WD'(e')_\PD$; equality is attained for at least one $e$. Indeed, if $t\in\idm_{X,\PD}\setminus\idm_{X,\PD}^2$ is a local parameter for the DVR $\CO_{X,\PD}$ and $k:=\WD'(e')_\PD\in\Z$, then $\WD'(t^{-k}e')_\PD=0$, that is, $t^{-k}e'\in\CE'_\PD$. By surjectivity there is a $\tilde e\in\CE_\PD$ which maps to $t^{-k}e'$ whence
\[
0\;\leq\;\WD(\tilde e)_\PD\;\leq\;\WD'(t^{-k}e')_\PD\;=\;0
\]
and so $\WD(\tilde e)_\PD=0$. As a result, $t^k\tilde e\in\mu_\eta^{-1}(e')$ and
$\WD(t^k\tilde e)_\PD=k=\WD'(e')_\PD$. Conversely, let $e'\in\CE'_\PD\subseteq\CE'_\eta$. Then there exists a $e\in\CE_\eta$ with $\mu_\eta(e)=e'$ and $\WD_\CE(e)_\PD=\WD_\CE'(e')_\PD\geq0$. Hence $e\in\CE_\PD$ so that $\mu_P$ is surjective.
\end{proof}

\begin{corollary}
\label{coro:DirSum}
The Weil decoration of the direct sum is given by
\[
\WD_{\CE\oplus\CE'}(e\oplus e')=\WD_\CE(e)\wedge\WD_{\CE'}(e').
\]
\end{corollary}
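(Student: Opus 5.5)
Corollary~\ref{coro:DirSum} follows from Proposition~\ref{prop:QuotWeilDecos} applied to the summation map $\mu\colon\CE\oplus\CE\to\CE$ — wait, that computes the wrong object. Instead I use the kernel and quotient statements via the projections and inclusions. Write $\CF:=\CE\oplus\CE'$, so $\CF_\eta=\CE_\eta\oplus\CE'_\eta$. The projections $\pi\colon\CF\to\CE$ and $\pi'\colon\CF\to\CE'$ are morphisms of reflexive sheaves. Under the equivalence of Theorem~\ref{thm:EquivalenceCat} they give morphisms of Weil decorations, so
\[
\WD_\CF(e\oplus e')\leq\WD_\CE(e)\quad\text{and}\quad\WD_\CF(e\oplus e')\leq\WD_{\CE'}(e'),
\]
and hence $\WD_\CF(e\oplus e')\leq\WD_\CE(e)\wedge\WD_{\CE'}(e')$.

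For the reverse inequality, note that $\CE$ is the kernel of $\pi'$ with torsion-free target $\CE'$. Proposition~\ref{prop:KerWeilDecos} therefore gives $\WD_\CF(e\oplus0)=\WD_\CE(e)$, and symmetrically $\WD_\CF(0\oplus e')=\WD_{\CE'}(e')$. Axiom (W2) for $\WD_\CF$ then yields
\[
\WD_\CF(e\oplus e')\geq\WD_\CF(e\oplus0)\wedge\WD_\CF(0\oplus e')=\WD_\CE(e)\wedge\WD_{\CE'}(e').
\]

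One edge case needs care: if $e=0$ or $e'=0$, then one side of the meet is $\infty$. The convention $\infty\wedge D=D$ makes the formula reduce to the kernel statement already proved. Alternatively, everything can be checked stalkwise at each $\PD$ via Proposition~\ref{prop:CompDe}: concatenating $\PD$-orthogonal bases of $\CE_\PD$ and $\CE'_\PD$ gives a $\PD$-orthogonal basis of $\CF_\PD$, which gives the formula coefficientwise. The only step that needs attention is this bookkeeping of the $\infty$ convention; the rest is formal.
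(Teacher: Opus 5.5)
Your argument is correct. The paper gives no separate proof of this corollary; it simply records it after Proposition~\ref{prop:QuotWeilDecos} as a consequence of the kernel and quotient statements. Your two-inequality argument is a legitimate way to fill it in. The projections $\CE\oplus\CE'\to\CE$ and $\CE\oplus\CE'\to\CE'$ are morphisms of Weil decorations, which gives $\leq$. Proposition~\ref{prop:KerWeilDecos} applied to $\CE\oplus0=\ker(\CE\oplus\CE'\to\CE')$, together with (W2), gives $\geq$. Your stalkwise alternative, via concatenated $\PD$-orthogonal bases and Proposition~\ref{prop:CompDe}, is also correct.

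There is an even shorter route. The assignment $\WD_\CE\wedge\WD_{\CE'}$ visibly satisfies (W0)--(W2). By Proposition~\ref{prop:WDRS}(iii), its associated sheaf is $U\mapsto\{e\oplus e'\mid\WD_\CE(e)|_U\geq0,\ \WD_{\CE'}(e')|_U\geq0\}=\CE(U)\oplus\CE'(U)$. Injectivity of $\sigma$ in Proposition~\ref{prop:Bijection} then gives the formula at once, with no need for the kernel proposition. Likewise, the kernel step in your proof is really just the identity $(K\cdot(e\oplus0))\cap(\CE\oplus\CE')=((K\cdot e)\cap\CE)\oplus0$.

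Finally, delete the abandoned first sentence about the summation map.
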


\subsection{The Euler sequence}
\label{subsec:EulSeq} 
We illustrate our methods by considering the Euler sequence of a smooth toric variety $X=\toric(\Sigma)$, namely
\begin{equation} 
\label{eq:9DiagDR}
\begin{tikzcd}[column sep=17pt]
0\ar[r]&\Omega_X\ar[r,"\iota_X"]&
\bigoplus\limits_{\rho\in\Sigma(1)}\CO_{X}(-D_\rho)\ar[r]&
\Cl(X)\otimes\CO_{X}\ar[r]&0.
\end{tikzcd}
\end{equation}
Now $\Omega_X|_\TT$ becomes the sheaf $M\otimes_\Z\kk[M]$ by sending $\dd x^m$ to $m\otimes x^m$ whence 
\[
(\Omega_X)_\eta=M_K=M\otimes_\Z K,  
\]
cf.~\cite[8.1.2]{CoxBook}. In fact, taking stalks at the generic point yields the fundamental sequence~\eqref{eq:FundSeq} tensored by $\otimes_\Z K$, that is, 
\begin{equation}
\label{eq:FundSeqK} 
\begin{tikzcd}
0\ar[r]&M_K\ar[r,"\iota_K"]&K^{\Sigma(1)}\ar[r,"{[}\cdot{]_K}"]&\Cl(X)\otimes K\ar[r]&0.
\end{tikzcd}
\end{equation}
By Proposition~\ref{prop:KerWeilDecos}\,, $\WD_{\Omega_X}$ is the restriction of the Weil decoration of $\bigoplus_\rho\CO_X(-D_\rho)$ to the generic stalk $(\Omega_X)_\eta$ whence
\begin{equation}
\label{eq:WDOmega}
\WD_{\Omega_X}(m)=
\bigwedge\limits_{\rho\in\Sigma(1)}\!\!\Big(\!\div(\MN{m}{\rho})-D_\rho\Big)
\end{equation}
for all $m\in M_K$. Since $\Gamma(\TT,\Omega_X)=M_\kk\otimes_\kk\kk[M]$, the toric slice is $M_\kk=M\otimes_\Z\kk$. By Proposition~\ref{prop:ToricSlicing}, \begin{equation}
\label{eq:TSOmegaX}
(\WD_{\Omega_X})_{M_\kk}(m)=-\sum_{\MN{m}{\rho}\neq 0}D_\rho
\end{equation}
for $m\in M_\kk$, in accordance with~\cite[4.9]{TRS}.

\medskip

Next, we turn to the Weil decoration of the tangent sheaf $\CT_X$. First, some further toric terminology is in order. Let $\Sigma(d)$ denote the subset of $d$-dimensional cones in $\Sigma$, and let $\sigma(1)\subseteq\Sigma(1)$ be the set of rays contained in $\sigma$. By smoothness, they define a basis of $N$ if $\sigma\in\Sigma(n)$ so that
\[
\piN_\sigma\colon\Z^{\sigma(1)}\hookrightarrow\Z^{\Sigma(1)}\stackrel{\piN}{\twoheadrightarrow}N
\]
extends to an isomorphism $K^{\sigma(1)}\cong N_K$. In particular, we can assign to any $a\in N_K$ a uniquely determined element $\taSig:=\piN_\sigma^{-1}(a)\in K^{\sigma(1)}$. If, for any $\rho\in\sigma(1)$, we let $\rhoSig\in\sigma^\vee(1)$ be the element of the dual basis of $\sigma(1)$ with $\MN{\rhoSig}{\rho}=1$, then the $\rho$-coordinate of $\taSig$ in $K^{\sigma(1)}$ is given by $\taSig_\rho=\MN{\rhoSig}{a}$. Proposition~\ref{prop:Dual} implies via an explicit computation the following formula.

\begin{proposition}
\label{prop:TXWD}
Let $n=\dim X$. Then for $a\in N_K$ we have
\begin{equation}
\label{eq:TangBdlWD}
\WD_{\CT_X}(a)=\bigwedge_{\sigma\in\Sigma(n),\,\rho\in\sigma(1)}\!\! 
\Big(\!\div \MN{\rhoSig}{a} + D_\rho + \hspace{-1em}
\sum_{\rho'\in\Sigma(1)\setminus\sigma(1)} \hspace{-1.5em}D_{\rho'}\Big).
\end{equation}
Equivalently, for any fixed $\sigma\in\Sigma(n)$ and prime divisor $\PD\in U_\sigma$ we have
\begin{equation}
\label{eq:TangBdlWDForPD}
\WD_{\CT_X}(a)_\PD=\min_{\rho\in\sigma(1)}\big(\ord_\PD\MN{\rhoSig}{a} + \delta_{D_\rho,\PD}\big).
\end{equation}
\end{proposition}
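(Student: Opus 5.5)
The plan is to apply Proposition~\ref{prop:Dual} with $\CE=\Omega_X$ and $\CE^\vee=\CT_X$, using $(\CT_X)_\eta=N_K=(M_K)^\vee$. For $a\in N_K$ the pairing $\varphi=\MN{\cdot}{a}$ then gives
\[
\WD_{\CT_X}(a)=\bigwedge_{m\in M_K}\Big(\div\MN{m}{a}-\WD_{\Omega_X}(m)\Big).
\]
Both sides are sums over prime divisors, so I would argue coefficientwise. I would first prove the local formula~\eqref{eq:TangBdlWDForPD}. Afterwards I would deduce~\eqref{eq:TangBdlWD} by checking that its right-hand side has the same $\PD$-coefficient for every $\PD$.

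For~\eqref{eq:TangBdlWDForPD}, fix $\sigma\in\Sigma(n)$ and a prime divisor $\PD\in U_\sigma$. On $U_\sigma\cong\A^n$ the sheaf $\Omega_X$ is free with basis $\dd x^{\rhoSig}$, $\rho\in\sigma(1)$. Under the identification $\dd x^m\mapsto m\otimes x^m$, these correspond to the vectors $x^{\rhoSig}\rhoSig\in M_K$. By Proposition~\ref{prop:CompDe}, this family is a $U_\sigma$-orthogonal basis for $\WD_{\Omega_X}$. The same fact can be read off from~\eqref{eq:WDOmega}, which I would use as a consistency check.

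The tangent sheaf on $U_\sigma$ is then free with the dual basis $x^{-\rhoSig}\rho$. Writing $a=\sum_{\rho\in\sigma(1)}\MN{\rhoSig}{a}\rho$, we get
\[
a=\sum_{\rho\in\sigma(1)}\MN{\rhoSig}{a}\,x^{\rhoSig}\cdot\big(x^{-\rhoSig}\rho\big).
\]
Applying Proposition~\ref{prop:CompDe} to $\CT_X$ gives
\[
\WD_{\CT_X}(a)_\PD=\min_{\rho\in\sigma(1)}\ord_\PD\big(\MN{\rhoSig}{a}x^{\rhoSig}\big).
\]
Moreover, $\ord_\PD(x^{\rhoSig})=\MN{\rhoSig}{\rho'}$ if $\PD=D_{\rho'}$ with $\rho'\in\sigma(1)$, and $0$ otherwise. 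Hence this equals $\delta_{D_\rho,\PD}$, which is exactly~\eqref{eq:TangBdlWDForPD}. This route bypasses the infimum over $m$; alternatively, Proposition~\ref{prop:Dual} with $v$ running over the basis elements recovers the same value.

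For the global formula, fix a prime divisor $\PD$ and consider the $\PD$-coefficient of the term indexed by $(\sigma',\rho)$, namely
\[
\ord_\PD\MN{\hat\rho_{\sigma'}}{a}+\delta_{D_\rho,\PD}+\sum_{\rho'\notin\sigma'(1)}\delta_{D_{\rho'},\PD}.
\]
The main obstacle is to show that the minimum over all pairs $(\sigma',\rho)$ equals the local minimum computed with a $\sigma$ such that $\PD\in U_\sigma$. Such a $\sigma$ exists: if $\PD$ is not toric any maximal cone works, and if $\PD=D_{\rho_0}$ we take $\sigma\ni\rho_0$, using smoothness and the assumption that the rays span.

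The terms with $\sigma'=\sigma$ reproduce~\eqref{eq:TangBdlWDForPD}. For any other $\sigma'$ the inequality $\ge$ follows from the fact that $\{x^{-\hat\rho_{\sigma'}}\rho\}$ generates $\CT_X$ on $U_{\sigma'}$. The extra divisor $\sum_{\rho'\notin\sigma'(1)}D_{\rho'}$ accounts for the poles of $x^{\hat\rho_{\sigma'}}$ outside $U_{\sigma'}$: the valuation $\MN{\hat\rho_{\sigma'}}{\rho'}$ may be negative there. So these terms bound $\WD_{\CT_X}(a)_\PD$ from above only after a correction. I would control this through the change of basis between $\sigma$ and $\sigma'$ coordinates, $\MN{\hat\rho_{\sigma'}}{a}=\sum_{\tau\in\sigma(1)}\MN{\hat\rho_{\sigma'}}{\tau}\MN{\hat\tau_\sigma}{a}$. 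Since this is a linear combination with integer constant coefficients, the ultrametric inequality gives $\ord_\PD\MN{\hat\rho_{\sigma'}}{a}\ge\min_\tau\ord_\PD\MN{\hat\tau_\sigma}{a}$. That settles the inequality for nontoric $\PD$ and for toric $\PD=D_{\rho_0}$ with $\rho_0\notin\sigma'(1)$, where the $+1$ from $\sum_{\rho'\notin\sigma'(1)}$ compensates the $\delta$-term. When $\rho_0\in\sigma'(1)$, the divisor $\PD$ already lies in $U_{\sigma'}$ and the local computation applies directly. This case analysis is the delicate bookkeeping I expect to take the most care.
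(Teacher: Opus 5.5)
Your proof is correct. For the local formula~\eqref{eq:TangBdlWDForPD} it takes a different and shorter route than the paper.

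The paper never writes down a local frame of $\CT_X$. It proceeds in three steps:
\begin{itemize}
\item It gets the lower bound $\WD_{\CT_X}(a)_\PD\geq\min_{\rho\in\sigma(1)}\big(\ord_\PD\MN{\rhoSig}{a}+\delta_{D_\rho,\PD}\big)$ from Proposition~\ref{prop:QuotWeilDecos}, applied to the Euler surjection $\bigoplus_\rho\CO_X(D_\rho)\surj\CT_X$ and evaluated on the special preimage $\taSig$.
\item It gets upper bounds from Proposition~\ref{prop:Dual} together with~\eqref{eq:WDOmega}.
\item It closes the gap by testing against $m=\rhoStSig$ for a minimising ray.
\end{itemize}

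You instead identify $x^{-\rhoSig}\rho$, $\rho\in\sigma(1)$, as the frame of $\CT_X$ on $U_\sigma$ dual to $\dd x^{\rhoSig}\leftrightarrow x^{\rhoSig}\rhoSig$. By Proposition~\ref{prop:CompDe} this frame is $\PD$-orthogonal for every $\PD\in U_\sigma$. The formula then follows from $\ord_\PD x^{\rhoSig}=\delta_{D_\rho,\PD}$ for $\PD\in U_\sigma$.

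What each route buys:
\begin{itemize}
\item Yours is more direct and avoids the max--min bookkeeping, at the cost of using the standard local description of $\CT_X$ on smooth affine charts.
\item The paper's is deliberately a demonstration of the kernel/quotient/dual calculus. It computes $\WD_{\CT_X}$ purely from the Euler sequence and $\WD_{\Omega_X}$, without knowing a frame in advance. This is the same kind of computation the paper later runs on the Horrocks--Mumford monad.
\end{itemize}

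For the global formula~\eqref{eq:TangBdlWD}, your argument is exactly the paper's Step~4, with the roles of $\sigma$ and $\sigma'$ swapped. You use the constant integer change of basis between the dual bases, the ultrametric inequality, and the $+1$ from $\sum_{\rho'\notin\sigma'(1)}D_{\rho'}$ to absorb the $\delta$-term.

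One small correction. Smoothness and the spanning assumption do not imply that every ray lies in some $\sigma\in\Sigma(n)$, or even that $\Sigma(n)\neq\emptyset$. For example, a fan in $\R^2$ consisting of three rays and no $2$-cones satisfies both. That every prime divisor lies in some $U_\sigma$ with $\sigma\in\Sigma(n)$ is a tacit hypothesis of the statement itself (e.g.\ $\Sigma$ complete or pure). The paper also uses it without comment. State it as an assumption rather than deriving it.
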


\begin{proof}
Let $\PD\in X$ be a prime divisor. We proceed in several steps.

\smallskip

{\bf Step 1:}
Applied to $\bigoplus_{\rho\in\Sigma(1)}\CO_X(D_\rho)\twoheadrightarrow\CT_X$, Proposition~\ref{prop:QuotWeilDecos} yields
\begin{align*}
\WD_{\CT_X}(a)_\PD&=\max\limits_{\ta\mapsto a}\WD_{\bigoplus\CO(D_\rho)}(\ta)_\PD
\;=\;\max\limits_{\ta\mapsto a}\min\limits_{\rho\in\Sigma(1)}\WD_{\CO(D_\rho)}(\ta_\rho)_\PD\\
&=\max\limits_{\ta\mapsto a}\min\limits_{\rho\in\Sigma(1)}\big(\ord_\PD\ta_\rho + \delta_{D_\rho,\PD}\big)\;=:\;\fbox{$\max\limits_{\ta\mapsto a}\Phi(\ta)_\PD$}\,.
\end{align*}
If $\PD\in U_\sigma$ for $\sigma\in\Sigma(n)$, we consider the special preimage $\taSig\in K^{\sigma(1)}$ of $a$. This results in $\WD_{\CT_X}(a)_\PD\geq \Phi\big(\taSig\big)_\PD$ and
\begin{align}
\Phi\big(\taSig\big)_\PD&=\min\limits_{\rho\in\Sigma(1)}\big(\ord_\PD\taSig_\rho+\delta_{D_\rho,\PD}\big)\;=\min\limits_{\rho\in\sigma(1)}\big(\ord_\PD\taSig_\rho+\delta_{D_\rho,\PD}\big)\nonumber\\ 
&=\min\limits_{\rho\in\sigma(1)}\big(\ord_\PD\MN{\rhoSig}{a}+\delta_{D_\rho,\PD}\big)\label{eq:PhiPD}.
\end{align}
The proof of the second formula~\eqref{eq:TangBdlWDForPD} boils down to the claim $\Phi\big(\taSig\big)_\PD=\WD_{\CT_X}(a)_\PD$.

\smallskip

{\bf Step 2:}
Towards this end we use Proposition~\ref{prop:Dual} which implies
\[
\WD_{\CT_X}(a)_\PD=\min\limits_{m\in M_K}\!\big(\ord_\PD\MN{m}{a}-\WD_{\Omega_X}\!(m)_\PD\,\big).
\]
In particular, we conclude
\[
\Phi\big(\taSig\big)_\PD\;\leq\;\max\limits_{\ta\mapsto a}\Phi(\ta)_\PD\;=\;\WD_{\CT_X}(a)_\PD\;\leq\;\ord_\PD\!\MN{m}{a}-\WD_\Omega(m)_\PD
\]
for all $m\in M_K$.

\smallskip

{\bf Step 3:}
We finish by exhibiting an $m\in M_K$ with $\Phi\big(\taSig\big)_\PD=\ord_\PD\!\MN{m}{a}-\WD_\Omega(m)_\PD$. Let $\rhoSt\in\sigma(1)$ be a ray realising the minimum in~\eqref{eq:PhiPD} for $\Phi\big(\taSig\big)_\PD$, i.e., 
\[
\Phi\big(\taSig\big)_\PD=\ord_\PD\MN{\rhoStSig}{a}+\delta_{D_{\rhoSt},\PD}.
\]
We put $m:=\rhoStSig\in\sigma^\vee(1)\subseteq M$ and obtain
\begin{align}
\Phi\big(\taSig\big)_\PD-\ord_\PD\!\MN{m}{a}+\WD_\Omega(m)_\PD&=
\Phi\big(\taSig\big)_\PD-\ord_\PD\!\MN{\rhoStSig}{a}+\WD_\Omega(\rhoStSig)_\PD\nonumber\\ 
&=\delta_{D_\rhoSt,\PD}+\WD_\Omega(\rhoStSig)_\PD\nonumber\\ 
&=\delta_{D_\rhoSt,\PD}+\min_{\rho\in\Sigma(1)}\!\big(\ord_\PD\!\MN{\rhoStSig}{\rho}-\delta_{D_\rho,\PD}\big).\label{eq:MinPhi}
\end{align}
Now $\PD\in\TT$ entails $\delta_{D_\rhoSt,\PD}=\delta_{D_\rho,\PD}=0$, and the pairings $\MN{\rhoStSig}{\rho}$ are constant but not simultanously zero, e.g.\ for $\rho=\rhoStSig$. Hence the minimum in~\eqref{eq:MinPhi} is finite and equals zero. On the other hand, if $\PD=D_\mu$ for a $\mu\in\Sigma(1)$, then $\PD\in U_\sigma$ implies $\mu\in\sigma(1)$. Furthermore, 
\[
\ord_\PD\!\MN{\rhoStSig}{\rho}-\delta_{\rho,\mu}=
\begin{cases}
\ord_\PD\!\MN{\rhoStSig}{\rho}-0\geq0,&\rho\neq\mu,\\
\infty-1,&\rho=\mu\text{ and }\rho\neq\rhoSt,\\
-\delta_{\rho,\mu}=-1,&\rhoSt=\rho=\mu.
\end{cases}
\]
Hence, the minimum of this equals $-\delta_{D_\rhoSt,\PD}$ which cancels the first term in~\eqref{eq:MinPhi}.

\smallskip

{\bf Step 4:}
It remains to check~\eqref{eq:TangBdlWD}. For a fixed $\sigma\in\Sigma(n)$, 
we let
\[
\Psi(a,\sigma)_\PD:=
\min_{\rho\in\sigma(1)}\big(\div \MN{\rhoSig}{a} + D_\rho + \hspace{-0em}
\sum_{\rho'\notin\sigma(1)} \hspace{-0em}D_{\rho'}\big)_\PD.
\] 
If $\PD\in U_\sigma$, then $\Psi(a,\sigma)_\PD=\WD_{\CT_X}(a)_\PD$ 
by the formula~\eqref{eq:TangBdlWDForPD} established in Step 3.

\smallskip

On the other hand, $\PD\notin U_\sigma$ entails 
$\Psi(a,\sigma)_\PD=\min_{\rho\in\sigma(1)}\big(\ord_{\PD}\MN{\rhoSig}{a}+1\big)$. 
Now for any cone $\sigma'$ with $\PD\in U_{\sigma'}$ we can express each element $\rhoSig\in\sigma^\vee(1)$ in terms of the $\Z$-basis $(\sigma')^\vee(1)\subseteq M$. The usual valuation properties then imply
\[
\ord_{\PD}\MN{\rhoSig}{a}+1\geq \Psi(a,\sigma')_\PD=\WD_{\CT_X}(a)_\PD.
\]
In particular, $\Psi(a,\sigma)_\PD\geq \Psi(a,\sigma')_\PD$ does not contribute to the meet in~\eqref{eq:TangBdlWD}.
\end{proof}

\begin{remark}
It is straightforward to check that specialising~\eqref{eq:TangBdlWD} to the toric slice $N_\kk\subseteq N_K$ recovers the formula 
\[
(\WD_{\CT_X})_{N_k}(v)=\sum_{\rho\in\spann(v)}D_\rho
\]
for $v\in N_\kk$ from~\cite[4.6]{TRS} for the toric Weil decoration of $\CT_X$.
\end{remark}

\section{The Horrocks-Mumford bundle}
\label{sec:HMBundle}
In this section we will determine the Weil decoration of the Horrocks-Mumford bundle, subsequently referred to as {\em $\HM$-bundle}. In a way, this is the ``most toric'' non-toric sheaf, and this will be reflected in its Weil decoration.

\subsection{The monad construction of the \texorpdfstring{$\HM$}{HM}-bundle}
\label{subsec:Hulek}
Among the various constructions of the $\HM$-bundle, cf.\ \cite{HulekHM}, we presently review the original one via monads from~\cite{hm}. Since we solely work with $X=\PP^4$, we simply write $\CO$ for $\CO_{\PP^4}$ etc. Further, we define for any $\kk$-vector space $W$ the sheaf 
\[
W(\ell):=W\otimes_\kk\CO(\ell). 
\]
Consider $V:=\kk^5$ with its standard basis $\{e_0,\ldots,e_4\}=\{e_\nu\mid
\nu\in\Z/5\Z\}$ and the associated projective space $\PP^4=\PP(V)$. The dual basis $\{z_0,\ldots,z_4\}=\{z_\nu\mid\nu\in\Z/5\Z\}$ of $V^*$ induces homogeneous coordinates on $\PP^4$ and defines the hyperplanes \fbox{$H_\nu=\{z_\nu=0\}$}. Subsequently, we use the natural identifications
\[
\textstyle V=\spann_\kk\{\frac{\partial}{\partial z_0},\ldots,\frac{\partial}{\partial z_4}\}\quad\text{and}\quad V^*=\spann_\kk\{\dd z_0,\ldots,\dd z_4\}.
\]
A prominent role is played by the section
\[
s=\sum_{\nu}\frac{\partial}{\partial z_\nu}\otimes z_\nu\in\kG(\PP^4,V(1)).
\]
First, $s$ induces the morphism $\CO\to V(1)$ in the dual Euler sequence
\begin{equation}
\label{eq:DualEuler}
\begin{tikzcd}[column sep=huge]
0\ar[r]&\CO\ar[r,"s"]&V(1)\ar[r,"\iota^*"]&\CT\ar[r]&0.
\end{tikzcd}
\end{equation}
Second, $s$ appears in the Koszul complex
\[
0\to\CO\stackrel{s}{\longrightarrow}V(1)\stackrel{\wedge s}{\longrightarrow}\fbox{$(\Lambda^2V)(2)\stackrel{\wedge s}{\longrightarrow}(\Lambda^3V)(3)$}\stackrel{\wedge s}{\longrightarrow}(\Lambda^4V)(4)\to(\Lambda^5V)(5)\to0.
\]
The map in the framed box above factorises via
\begin{equation}
\label{eq:MapFB}
\begin{tikzcd}[column sep=large]
\hspace{-10pt}(\Lambda^2V)(2)\ar[r,twoheadrightarrow,"p_0:=\Lambda^2\rho"]&\Lambda^2\CT\ar[r,hookrightarrow,"q_0:=p^*_0\circ\Phi"]&(\Lambda^2V^*)(-2)\otimes_\CO\CO(\partial\PP^4)\cong\big(\Lambda^3V\big)(-2)(5),
\end{tikzcd}
\end{equation}
where $\partial\PP^4=\sum_{\nu=0}^4H_\nu$, and $\Phi$ is the isomorphism identifying $\Lambda^2\CT\otimes\CO(-\partial\PP^4)$ with $(\Lambda^2\CT)^*=\Omega^2$. Furthermore, $\Lambda^5V=\kk$ as $V$ comes with a distinguished basis. Now define the linear maps
\[
f^\pm\colon V\to\Lambda^2V,\quad f^+(\tfrac{\partial}{\partial z_\nu})=\tfrac{\partial}{\partial z_{\nu+2}}\wedge\tfrac{\partial}{\partial{z_{\nu-2}}}\text{ and }f^-(\tfrac{\partial}{\partial z_\nu})=\tfrac{\partial}{\partial{z_{\nu+1}}}\wedge\tfrac{\partial}{\partial{z_{\nu-1}}},\;\nu\in\Z/5\Z.
\]
Denoting \fbox{$f_{\pm}=f^{\pm*}$} the maps dual to $f^{\pm}$,~\eqref{eq:MapFB} fits into the diagram
\begin{equation}
\label{eq:Hulek}
\begin{tikzcd}[column sep=14pt]
&(\Lambda^2V)(2)\ar[r,"p_0"]&\Lambda^2\CT\ar[r,"q_0"]&(\Lambda^3V)(-2)\otimes_\CO\CO(\partial\PP^4)\ar[rd,"f_-"]&\\
V(2)\ar[ru,"f^+"]\ar[rd,"f^-"']&&&&\text{\hspace{-35pt}}V^*(-2)\otimes_\CO\CO(\partial\PP^4).\\
&(\Lambda^2V)(2)\ar[r,"p_0"]&\Lambda^2\CT\ar[r,"q_0"]&(\Lambda^3V)(-2)\otimes_\CO\CO(\partial\PP^4)\ar[ru,"f_+"']
\end{tikzcd}
\end{equation}
Its commutativity will follow from Diagram~\eqref{eq:GenStalkDia2} together with Equations~\eqref{eq:UpperPath} and~\eqref{eq:LowerPath} below. Ultimately, the morphisms
\[
p=(p_0\circ f^+)\oplus(p_0\circ f^-)\quad\text{and}\quad q=(f_-\circ q_0)\oplus(-f_+\circ q_0)
\]
(note the sign before $f_+$!) lead to the monad
\begin{equation}
\label{eq:Monad}
V(2)\;\stackrel{p}{\hookrightarrow}\;\Lambda^2\CT\oplus\Lambda^2\CT\;\stackrel{q}{\surj}\;V^*(-2)\otimes_\CO\CO(\partial\PP^4)
\end{equation}
whose cohomology defines the Horrocks-Mumford bundle $\HM$.

\begin{theorem}
\label{thm:WDofHM}
We have a canonical isomorphism $K^2\stackrel{\sim}{\longrightarrow}\HM_\eta$, and the Weil decoration of $\HM$ is induced by the family of semi-norms $\varphi_{h_\nu,H_\nu}$ given by
\[
h_\nu=\tfrac{z_{\nu+1}z_{\nu-1}}{z_{\nu+2}z_{\nu-2}}\in\kappa(H_\nu)^*,\quad\nu\in\Z/5\Z,  
\]
cf.\ Proposition~\ref{prop:SemiNorm}. Explicitely, we have
\[
\renewcommand{\arraystretch}{1.3}
\WD_\HM(f,g)_\PD=
\begin{cases}
\min\{\ord_\PD(f),\ord_\PD(g)\}+1,&\PD=H_\nu\,\text{and}\,
\tfrac fg(H_\nu)=h_\nu\\
\min\{\ord_\PD(f),\ord_\PD(g)\},&\text{else}.
\end{cases}
\]
\end{theorem}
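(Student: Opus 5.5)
The plan is to compute $\WD_\HM$ from the monad~\eqref{eq:Monad} by Theorem B: the kernel part uses Proposition~\ref{prop:KerWeilDecos}, the quotient part uses Proposition~\ref{prop:QuotWeilDecos}. Write $\CK:=\ker q\subseteq\Lambda^2\CT\oplus\Lambda^2\CT$, so that $\HM=\CK/\operatorname{im}p$. The cokernel $V^*(-2)\otimes\CO(\partial\PP^4)$ of $\CK\hookrightarrow\Lambda^2\CT^{\oplus2}$ is locally free, hence torsion-free. Therefore $\WD_\CK$ is the restriction of $\WD_{\Lambda^2\CT}\wedge\WD_{\Lambda^2\CT}$ (Corollary~\ref{coro:DirSum}) to $\CK_\eta=\ker q_\eta$. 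The map $\CK\to\HM$ is surjective, so
\[
\WD_\HM(e')_\PD=\max\{\WD_\CK(e)_\PD\mid e\mapsto e'\}.
\]
First I pass to generic stalks and trivialise everything on the torus with toric coordinates. Here $\CT_\eta=N_K$, $(\Lambda^2\CT)_\eta=\Lambda^2N_K$ of dimension $6$, and $V_\eta=K^5$. I then compute $p_\eta$ and $q_\eta$ explicitly in terms of $f^\pm$, the projection $\Lambda^2\rho$ (killing the Euler vector), and $\Phi$. The dimension count $12-5-5=2$ gives $\HM_\eta\cong K^2$. I would choose a canonical splitting: fix a complement of $\operatorname{im}p_\eta$ in $\ker q_\eta$ spanned by two explicit vectors $u_1,u_2$, which should come from the $\Z/5$-symmetric structure, presumably built from $z$-monomials. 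This gives the isomorphism $K^2\to\HM_\eta$, $(f,g)\mapsto fu_1+gu_2$.

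Next I need $\WD_{\Lambda^2\CT}$. I would get it either from the toric slice, where $\Lambda^2\CT$ is toric and Proposition~\ref{prop:ToricSlicing} applies, or from Proposition~\ref{prop:TXWD} together with a wedge analogue using adapted bases $\rhoSig$ on each chart $U_\sigma$. For a prime divisor $\PD\neq H_\nu$, all sheaves in the monad are locally free near $\PD$ and the toric structure is trivial there, in the sense that the coefficients $\delta_{D_\rho,\PD}$ vanish. The max-min problem then reduces to showing that the stalk $\HM_\PD$ has the $\PD$-orthogonal basis $u_1,u_2$; by Proposition~\ref{prop:CompDe} this gives $\min\{\ord_\PD f,\ord_\PD g\}$. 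I would check this by showing that $u_1,u_2,\operatorname{im}p$ span $\CK_\PD$ for $\PD$ in the big torus and on the nontoric divisors. This amounts to the determinant of a $7\times7$ minor being a unit along $\PD$, up to powers of the $z_\nu$ only.

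The main obstacle is the computation at $\PD=H_\nu$. By the cyclic $\Z/5$ symmetry it suffices to treat $\nu=0$ and work on a chart $U_\sigma\ni H_0$. There I must solve the optimisation
\[
\max_{w\in\operatorname{im}p_\eta}\WD_\CK(fu_1+gu_2+w)_{H_0}.
\]
The generic value is $\min\{\ord f,\ord g\}$. The jump by $+1$ occurs exactly when one can lift $(f,g)$, normalised to order $0$, to an element of $\CK_{H_0}$ that vanishes modulo $z_0$ after correcting by $\operatorname{im}p$. This is a linear condition on the residues $(f/g)(H_0)$ over $\kappa(H_0)$, of the form $\bar f-h_0\bar g=0$. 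I would reduce the sections modulo $z_0$ and identify the one-dimensional kernel of the induced residue map $\kappa(H_0)^2\to\operatorname{coker}$. Computing it should produce $h_0=z_1z_4/(z_2z_3)$. I also need that no jump by $2$ or more is possible. I would obtain this upper bound from the dual description of Proposition~\ref{prop:Dual} applied to $\HM^\vee\cong\HM(-5)$, or more simply by exhibiting a linear functional on $\CK_{H_0}$ that vanishes on $\operatorname{im}p$ and detects order.
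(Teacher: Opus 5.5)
Your plan is correct and is essentially the paper's own proof. It passes to generic stalks, obtains the decoration of $\ker q$ by restriction via Proposition~\ref{prop:KerWeilDecos}, maximises over $\im p_\eta$ via Proposition~\ref{prop:QuotWeilDecos}, and uses the cyclic symmetry to analyse a single $H_\nu$. The differences are cosmetic: you get $\WD_{\Lambda^2\CT}$ from the toric slice rather than from the Euler-sequence quotient of Lemma~\ref{lem:WDL2T}, and you phrase the jump as a residue computation in the fibre at $H_\nu$. You also explicitly plan the bound $\WD_\HM(f,g)_{H_\nu}\le\min\{\ord f,\ord g\}+1$ and the sufficiency of the residue condition, both of which the paper leaves implicit.

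The one point to pin down is the complement of $\im p_\eta$. The canonical choice is $U_\alpha$, spanned by the \emph{constant} vectors $(\alpha^-,0)$ and $(0,-\alpha^+)$. Here $\alpha^-,\alpha^+\in\Lambda^2N$ span the kernels of the constant factors $B_0^\top\Phi$ and $A_0^\top\Phi$ of $F_-=D_BB_0^\top\Phi$ and $F_+=D_AA_0^\top\Phi$, respectively. Rescaling these vectors by $z$-monomials, as you suggest, would change the resulting formula.
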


Proving Theorem~\ref{thm:WDofHM} will occupy us for the remainder of this section. The monad description naturally lends itself to a simple divide and conquer strategy: We split the computation into linear algebra (the $K$-vector spaces provided by the generic stalks) and an optimisation problem (determination of a maximum).

\subsection{Linear algebra}
\label{subsec:LinAlg}
To understand the generic stalk of the invertible sheaves $\CO(\ell)$, $\ell\in\Z$, let $\uz$ be shorthand for $(z_0,\ldots,z_4)$ and consider the {\em rational Cox ring}
\[
\QCox:=\{f(\uz)/g(\uz)\mid f,\,g\in\kk[\uz]\text{ are homogeneous, }g\neq0\}\subseteq\kk(\uz),
\]
which is a $\Z$-graded vector space over the field
\[
K=K(\PP^n)=\kk[\uz]_{((0))}=\kk(z_i/z_j\mid i,j=0,\ldots,4)=:\QCox_0.
\]
Every homogeneous component $\QCox_\ell$, $\ell\in\Z$, is a one-dimensional $K$-vector space and comprises the monomials $z_\nu^\ell$, $\nu\in\Z/5\Z$. Choosing, say $z=z_0$, gives the explicit representation
\[
\QCox=\bigoplus_{\ell\in\Z}\,\QCox_\ell=\bigoplus_{\ell\in\Z}\, K(\PP^n)\cdot z^\ell=K(\PP^n)[z,z^{-1}].
\]
Under this identification, the generic stalk $\CO(\ell)_\eta$ becomes $\QCox_\ell$, and
\[
\begin{tikzcd}
\CO(\ell\cdot H_0)\ar[d,"\cong"]\ar[r,hook]&K\ar[d,"\cdot z_0^\ell","\cong"']\\
\CO(\ell)\ar[r,hook]&\QCox_\ell 
\end{tikzcd}
\]
implies for instance
\begin{equation}
\label{eq:WDOl} 
\WD_{\CO(\ell)}(z_0^\ell)=\ell\cdot H_0. 
\end{equation}
In contrast, the generic stalk of the embedded invertible sheaf $\CO(\partial\PP^n)\hookrightarrow K$ occuring in~\eqref{eq:Hulek} is simply $K$. Thus, passing to generic stalks in Diagram~\eqref{eq:Hulek} renders the contribution of $\CO(\partial\PP^4)$ invisible and yields
\begin{equation}
\label{eq:GenStalkDia2}
\begin{tikzcd}[column sep=15pt]
&\Lambda^2V_K(2)\ar[r,"p_0"]&\Lambda^2N_K\ar[r,"\Phi","\sim"']&\Lambda^2M_K\ar[r,"p_0^*"]&\Lambda^2V^*_K(-2)\ar[rd,"f_-"]\\
V_K(2)\ar[ru,"f^+"]\ar[rd,"f^-"']&&&&&V^*_K(-2).\\
&\Lambda^2V_K(2)\ar[r,"p_0"]&\Lambda^2N_K\ar[r,"\Phi","\sim"']&\Lambda^2M_K\ar[r,"p_0^*"]&\Lambda^2V^*_K(-2)\ar[ru,"f_+"']
\end{tikzcd}
\end{equation}
Here, $M_K=M\otimes_\Z K$ and $N_K=N\otimes_\Z K$ are the generic stalks of the sheaf of differential forms $\Omega$ and the tangent sheaf $\CT$, cf.\ \ssect{subsec:EulSeq}, and we defined
\[
V_K(\ell):=V(\ell)_\eta=V\otimes_\kk\CO(\ell)_\eta=V\otimes_\kk\QCox_\ell.
\]
Finally, $\Phi\colon\Lambda^2N\stackrel{\sim}{\longrightarrow}\Lambda^2M$ is the natural isomorphism coming from $\Lambda^4N=\Z$ after the choice of an orientation. To fix one we start with the natural $K$-basis
\[
\{H_0,\ldots,H_4\}\subseteq\Div_\TT(\PP^4) 
\]
giving $K^5=\Div_\TT(\PP^4)\otimes_\Z K$. The map $\iota^*_K\colon K^5\to N_K$ obtained by dualising the sequence~\eqref{eq:FundSeqK} sends $H_\nu$ to the rays $a_\nu\in N$, $\nu\in\Z/5\Z$, cf.\ Example~\eqref{exam:PnToric}. This yields the ordered $K$-bases $\{a_1,a_2,a_3,a_4\}\subseteq N$ for $N_K$ and
\begin{equation}
\label{eq:Lambda2NKBasis}
\{[12],\; [13],\; [14],\; [23],\; [24],\; [34]\},\quad\text{where }[ij]:=a_i\wedge a_j,
\end{equation}
for $\Lambda^2N_K$. Finally, we let \fbox{$\partial_\nu:=z_\nu\tfrac{\partial}{\partial z_\nu}$} and take $z_\nu^{\ell-1}\partial_\nu=z_\nu^\ell\tfrac{\partial}{\partial{z_\nu}}$, $\nu\in\Z/5\Z$, as a $K$-basis for $V_K(\ell)$ whence
\begin{equation}
\label{eq:EulerSurj}
(\iota^*)_\eta=\iota^*_K\colon V_K(1)\twoheadrightarrow\CT_\eta=N_K,\quad\partial_\nu\mapsto a_\nu,\;\nu\in\Z/5\Z,
\end{equation}
cf.~\eqref{eq:DualEuler}. Then $p_0\circ f^+\colon V_K(2)\to\Lambda^2N_K$ is represented by the $(6\times 5)$ matrix
\begin{equation}
\label{eq:TableA}
\renewcommand{\arraystretch}{1.7}
A:=
\begin{array}{|ccccc|c}
z_0\partialP_0&z_1\partialP_1&z_2\partialP_2&z_3\partialP_3&
z_4\partialP_4&\\[0.3ex]
\hline
&&&\frac{z_3^2}{z_0z_1}&\frac{z_4^2}{z_1z_2}&[12]\\
&&&\frac{z_3^2}{z_0z_1}&&[13]\\
&&\frac{z_2^2}{z_0z_4}&\frac{z_3^2}{z_0z_1}&&[14]\\
\frac{z_0^2}{z_2z_3}&&&&&[23]\\
&&\frac{z_2^2}{z_0z_4}&&&[24]\\
&\frac{z_1^2}{z_3z_4}&\frac{z_2^2}{z_0z_4}&&&[34]\\
\hline
\end{array} 
\end{equation}
For instance, the third column $\frac{z_2^2}{z_0z_4}\cdot([14]+[24]+[34])$ is obtained from 
\[
z_2\partialP_2=z_2^2\frac{\partial}{\partial z_2}\mapsto z_2^2\cdot(\frac{\partial}{\partial z_4}\wedge \frac{\partial}{\partial z_0})=\frac{z_2^2}{z_0z_4}\cdot (\partialP_4\wedge\partialP_0)\mapsto\frac{z_2^2}{z_0z_4}\cdot \big(a_4\wedge (-a_1-a_2-a_3)\big).
\]
We rewrite $A$ as the product (again with zeroes omitted)
\begin{equation}
\label{eq:A0}
\fbox{$A=A_0\cdot D_A$}:=
\begin{pmatrix}
&&&1&1\\
&&&1&\\
&&1&1&\\
1&&&&\\
&&1&&\\
&1&1&&
\end{pmatrix}
\cdot\diag(\tfrac{z_0^2}{z_2z_3},\tfrac{z_1^2}{z_3z_4},\tfrac{z_2^2}{z_0z_4},\tfrac{z_3^2}{z_0z_1},\tfrac{z_4^2}{z_1z_2}).
\end{equation}
Similarly, we obtain for $p_0\circ f^-\colon V_K(2)\to\Lambda^2N_K$ the $(6\times 5)$ matrix
\begin{equation}
\label{eq:B0}
\arraycolsep=4pt
\fbox{$B=B_0\cdot D_B$}:=\left(\begin{array}{rrrrr}
&1&&&\\
&&-1&&-1\\
1&&&&\\
&-1&&&-1\\
&-1&&-1&\\
&&&&1
\end{array}\right)
\cdot\diag(\tfrac{z_0^2}{z_1z_4},\tfrac{z_1^2}{z_0z_2},\tfrac{z_2^2}{z_1z_3},\tfrac{z_3^2}{z_2z_4},\tfrac{z_4^2}{z_0z_3}).
\end{equation}
Finally, with respect to the $\Z$-basis of $\Lambda^2N$ from ~\eqref{eq:Lambda2NKBasis} and its induced dual basis in $\Lambda^2M$, the isomorphism $\Phi\colon\Lambda^2N\stackrel{\sim}{\to}\Lambda^2 M$ is given by the anti-diagonal matrix
\[
\Phi=\antidiag(1,-1,1,1,-1,1).
\]
Since $B^\top_0\Phi A_0=\Id_5$ is the identity matrix, the upper path of diagram~\eqref{eq:GenStalkDia2} leads to
\begin{equation}
\label{eq:UpperPath}
B^\top\Phi A=D_B(B^\top_0\Phi A_0)D_A=D_BD_A=\tfrac{1}{z_0\ldots z_4}\cdot\diag(z_0^5,z_1^5,z_2^5,z_3^5,z_4^5):=D.
\end{equation}
Similarly, the lower path of~\eqref{eq:GenStalkDia2} yields
\begin{equation}
\label{eq:LowerPath}
A^\top\Phi B=(B^\top \Phi^\top A)^\top=\tfrac{1}{z_0\ldots z_4}\cdot\diag(z_0^5,z_1^5,z_2^5,z_3^5,z_4^5)=D.
\end{equation}
As a result, Diagram~\eqref{eq:GenStalkDia2} is commutative, and so is therefore Diagram~\eqref{eq:Hulek}. The explicit matrix representations $A$ and $B$ for the maps $p_0\circ f^+\colon V_K(2)\to\Lambda^2 N_K$ and $p_0\circ f^-\colon V_K(2)\to\Lambda^2 N_K$ also yield the matrices \fbox{$F_-:=B^\top\Phi$} and \fbox{$F_+:=A^\top\Phi$} of $f_-\circ p^*_0\circ\Phi\colon\Lambda^2 N_K\to V^*_K(-2)$ and $f_+\circ p^*_0\circ\Phi\colon\Lambda^2 N_K\to V^*_K(-2)$, respectively. Hence, the generic stalk $\HM_\eta$ is the cohomology of
\begin{equation} 
\label{eq:MonadGS}
\begin{tikzcd}[column sep=huge]
V_K(2)\ar[r,"{(A,B)^\top}"]&\Lambda^2N_K\oplus\Lambda^2N_K\ar[r,"{(F_-,-F_+)}"]&V^*_K(-2)
\end{tikzcd}
\end{equation}
(still note the sign before $F_+$). The one-dimensional kernels of the matrices 
\[
\arraycolsep=4pt
A_0^\top\Phi=
\newcommand{\kre}{{\color{red}1}}
\newcommand{\krn}{{\color{red}0}}
\newcommand{\krm}{{\color{red}-1}}
\left(\begin{array}{rrrrrr}
0&\krn&1&\krn&\krn&0\\
1&\krn&0&\krn&\krn&0\\
1&\krm&0&\kre&\krn&0\\
0&\krn&0&\kre&\krm&1\\
0&\krn&0&\krn&\krn&1
\end{array}\right)
\quad\text{and}\quad
B_0^\top\Phi=
\renewcommand{\kre}{{\color{darkgreen}1}}
\renewcommand{\krn}{{\color{darkgreen}0}}
\renewcommand{\krm}{{\color{darkgreen}-1}}
\left(\begin{array}{rrrrrr}
\krn&0&\krn&1&0&\krn\\
\krn&1&\krm&0&0&\kre\\
\krn&0&\krn&0&1&\krn\\
\krn&1&\krn&0&0&\krn\\
\kre&0&\krm&0&1&\krn
\end{array}\right)
\]
which appear in $F_+=A^\top\Phi=D_A\cdot A_0^\top\Phi$ and $F_-=B^\top\Phi=D_B\cdot B_0^\top\Phi$, respectively, are generated by
\begin{equation} 
\label{eq:AlphaPM}
\fbox{$
\alpha^+:= {\color{red}[13]+[23]+[24]}\in \Lambda^2 N$}\quad\text{and}\quad\fbox{$\alpha^-:= {\color{darkgreen}[12]+[14]+[34]}\in\Lambda^2N$}\,.
\end{equation}
These vectors define the two-dimensional subspace  
\[
U_\alpha:=\spann_K\{(0,\newMinus\alpha^+),(\alpha^-,0)\}\subseteq\Lambda^2N_K\oplus\Lambda^2N_K.
\]
which is transversal to the $5$-dimensional space 
$V_K(2)\stackrel{\sim}{\longrightarrow}\im{({\renewcommand{\arraystretch}{0.6}\begin{array}{@{}c@{}}\scriptstyle A\\\scriptstyle B\end{array}})}$ inside the $7$-dimensional space $\ker(F_-,-F_+)$. Hence, the map
\[
K^2=U_\alpha \;\hookrightarrow\; \ker(F_-,-F_+) \;\surj\; \HM_\eta
\]
is an isomorphism.

\subsection{Optimisation}
\label{subsec:Optim}
The map $(F_-,-F_+)$ in the monad of generic stalks~\eqref{eq:MonadGS} is induced by the map $q\colon\Lambda^2\CT\oplus\Lambda^2\CT\surj V^*(-2)\otimes\CO(\partial\PP^4)$ in the monad of sheaves~\eqref{eq:Monad}. In view of Proposition~\ref{prop:KerWeilDecos}, the Weil decoration of $\ker q$ is the restriction of 
\[
\WD_{\Lambda^2\CT\oplus\Lambda^2\CT}(v^1,v^2)=\min\{
\WD_{\Lambda^2\CT}(v^1),\,\WD_{\Lambda^2\CT}(v^2)\},
\]
so we start with the Weil decoration of $\Lambda^2\CT$. The dual Euler sequence~\eqref{eq:DualEuler} induces
\begin{equation}
\label{eq:EulerLambda2}
\xymatrix{
\fbox{$\Lambda^2V_K(1)=(\Lambda^2V)\otimes_\kk\QCox_2$}\ar[r]^-{\iota^*_\eta}&\fbox{$\Lambda^2\CT_\eta=\Lambda^2N_K$}\ar[r]&0
}
\end{equation}
at generic stalk level. For the subsequent lemma and the remainder of this section we use the basis $[ij]=a_i\wedge a_j$ from~\eqref{eq:Lambda2NKBasis} for $\Lambda^2N_K$ and agree to let {\em latin indeces run from $1$ to $4$} and {\em greek indeces from $0$ to $4$}.

\begin{lemma}
\label{lem:WDL2T}
For $f_{ij}\in K$ with $f_{ji}=-f_{ij}$ we find
$\WD_{\Lambda^2\CT_\eta}\big(\sum_{i<j}f_{ij}[ij]\big)_\PD=\min\limits_{i<j}\{|f_{ij}|\}$ if $\PD\in\TT\subseteq\PP^4$, and  
\begin{equation}
\label{eq:WDLambda2T}
\WD_{\Lambda^2\CT_\eta}\big(\sum_{i<j}f_{ij}[ij]\big)_\PD=\min\limits_{j,\,\ell\neq k}\{\ord_{H_k}(f_{jk})+1,\ord_{H_k}(f_{j\ell})\}
\end{equation}
if $\PD=H_k$, $k\geq1$.
\end{lemma}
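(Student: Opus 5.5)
The plan is to reduce everything to Proposition~\ref{prop:CompDe}. Since $\CT$ is locally free, so is $\Lambda^2\CT$, and for every prime divisor $\PD$ it suffices to write down an $\CO_{\PP^4,\PD}$-basis of $(\Lambda^2\CT)_\PD$ in terms of the $K$-basis $[ij]$ of $\Lambda^2N_K$. The key observation is that if $\hat e_1,\ldots,\hat e_4$ is a local frame of $\CT$ on an open set $U$, then the $\hat e_i\wedge\hat e_j$, $i<j$, form a local frame of $\Lambda^2\CT$ on $U$. By Proposition~\ref{prop:CompDe} this frame is then $\PD$-orthogonal for $\WD_{\Lambda^2\CT}$ for all $\PD\in U$.

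\emph{First case: $\PD\in\TT$.} The toric slice of $\CT$ is $N_\kk$, and $\TT\subseteq\sliceU_{N_\kk}$ by Proposition~\ref{prop:ToricSlicing}. Hence $a_1,\ldots,a_4$ trivialise $\CT|_\TT$, and therefore the $[ij]$ trivialise $\Lambda^2\CT|_\TT$. Proposition~\ref{prop:CompDe} then gives directly
\[
\WD_{\Lambda^2\CT}\big(\textstyle\sum_{i<j} f_{ij}[ij]\big)_\PD=\min_{i<j}\ord_\PD(f_{ij}).
\]

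\emph{Second case: $\PD=H_k$ with $k\geq1$.} Work on $U_\rho$ for the ray $\rho=a_k$. The toric Weil decoration of $\CT$ is $(\WD_{\CT})_{N_\kk}(v)=\sum_{\rho\in\spann(v)}D_\rho$. Hence the Klyachko filtration at $a_k$ has $b_{a_k}(a_k)=1$ and $b_{a_k}(v)=0$ for $v\notin\kk a_k$. So $a_1,\ldots,a_4$ is an $a_k$-adapted basis with $b_{a_k}(a_j)=\delta_{jk}$. As in the discussion preceding Proposition~\ref{prop:ToricSlicing}, choose $m\in M$ with $\MN{m}{a_k}=-1$, for instance $m=-e_k^*$. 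Then
\[
\hat e_k=x^{m}a_k,\qquad \hat e_j=a_j\quad (j\neq k)
\]
is a local frame of $\CT$ on $U_{a_k}\ni H_k$. Wedging, the elements
\[
\hat e_j\wedge\hat e_k=x^{m}[jk]\quad (j\neq k),\qquad \hat e_j\wedge\hat e_\ell=[j\ell]\quad (j,\ell\neq k)
\]
form an $H_k$-orthogonal basis of $\Lambda^2\CT$. Now expand
\[
\sum_{i<j} f_{ij}[ij]=\sum_{j\neq k}\pm f_{jk}\,x^{-m}\,(\hat e_j\wedge\hat e_k)+\sum_{j<\ell,\;j,\ell\neq k} f_{j\ell}\,(\hat e_j\wedge\hat e_\ell),
\]
using $f_{ji}=-f_{ij}$; the signs are irrelevant for orders. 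Since $\ord_{H_k}(x^{-m})=\MN{-m}{a_k}=1$, Proposition~\ref{prop:CompDe} yields exactly the minimum in \eqref{eq:WDLambda2T}: the coefficients $f_{jk}$ contribute with a shift by $+1$, and the $f_{j\ell}$ with $j,\ell\neq k$ contribute unshifted. Equivalently, one could apply Proposition~\ref{prop:ToricSlicing} directly to the toric sheaf $\Lambda^2\CT$, with the Klyachko filtration of $\Lambda^2N_\kk$ at $a_k$ induced from that of $N_\kk$.

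The only point requiring care, and what I expect to be the main obstacle, is justifying that $\hat e_k=x^{m}a_k$ together with the other $a_j$ really is a frame of $\CT$ near $H_k$. This follows from the adapted-basis construction before Proposition~\ref{prop:ToricSlicing}, since $D_{a_k}\in U_{a_k}$. A direct check is also available: the Euler map \eqref{eq:EulerSurj} sends $\partial_k=z_k\tfrac{\partial}{\partial z_k}$ to $a_k$, so $a_k$ vanishes to order one along $H_k$, which is precisely compensated by $x^{m}$. A second point is that the exterior square of a frame is a frame, which is standard for locally free sheaves.
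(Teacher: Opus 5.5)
Your proof is correct, and it takes a genuinely different route from the paper's.

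\textbf{The paper's route.} It computes $\WD_{\Lambda^2\CT}$ by pushing forward along the surjection $(\Lambda^2V)(2)\twoheadrightarrow\Lambda^2\CT$ induced by the dual Euler sequence. By Proposition~\ref{prop:QuotWeilDecos}, the $\PD$-coefficient becomes a maximum over the fibre, parametrised by $\tau_1,\ldots,\tau_4\in K$, of a minimum of orders. The ultrametric inequality bounds this maximum from above, and $\tau_1=\cdots=\tau_4=0$ attains the bound.

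\textbf{Your route.} You avoid the optimisation entirely by exhibiting explicit $\PD$-orthogonal bases and reading off the answer from Proposition~\ref{prop:CompDe}. Near $H_k$ you use the $a_k$-adapted frame $x^{-e_k^*}a_k$, $a_j$ ($j\neq k$) of $\CT$, which is $\partial/\partial x_k$, $x_j\partial/\partial x_j$ in the affine chart. Its wedges form an $\CO_{\PP^4,H_k}$-basis of $(\Lambda^2\CT)_{H_k}=\Lambda^2(\CT_{H_k})$, and $\ord_{H_k}(x^{e_k^*})=1$ produces the shift by $+1$. (Incidentally, no signs arise: $f_{kj}[kj]=f_{jk}[jk]$.)

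\textbf{What each buys.}
\begin{itemize}
\item Your argument relies on local freeness of $\Lambda^2\CT$ and on the toric description of $\CT$. In exchange it is shorter and more conceptual.
\item It also handles $H_0$ just as easily: take the basis $a_0,a_2,a_3,a_4$, substitute $a_1=-a_0-a_2-a_3-a_4$, and the formula in the remark following the lemma drops out directly.
\item The paper's proof needs no knowledge of a local frame. It uses only the quotient formula and generic-stalk linear algebra, which is exactly the ``linear algebra plus optimisation'' machinery the authors go on to apply to the monad for $\HM$. So their proof doubles as a warm-up for that computation.
\end{itemize}
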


\begin{proof}
In order to apply Proposition~\ref{prop:QuotWeilDecos} to~\eqref{eq:EulerLambda2} we need to compute the fibre $\iota^{*-1}_\eta\big(\sum_{i<j}f_{ij}[ij]\,\big)$ first. Every element $\omega$ in $\Lambda^2V_K(1)=(\Lambda^2V)\otimes_\kk\QCox_2$ can be written as 
\begin{equation}
\label{eq:StalkEl}
\omega=\sum_{\mu<\nu}
\frac{\partial}{\partial z_\mu}\wedge\frac{\partial}{\partial z_\nu}\otimes \omega_{\mu\nu}z_\mu z_\nu=\sum_{\mu<\nu}\omega_{\mu\nu}\,\partial_\mu\wedge\partial_\nu
\end{equation}
for $\omega_{\mu\nu}\in K$ which implies
\[
\iota^*_\eta(\omega)=\sum_{i<j}(\omega_{ij}-\omega_{0j}+\omega_{0i})[ij].
\]
Indeed, $a_0=-\sum a_i$, and $\partial_\nu$ maps to $a_\nu$ by~\eqref{eq:EulerSurj} whence $\sum_j\frac{\partial}{\partial z_0}\wedge\frac{\partial}{\partial z_j}\otimes \omega_{0j}z_0z_j$ maps to $-\sum_{i,j}\omega_{0j}(a_i\wedge a_j)=\sum_{i<j}(\omega_{0i}-\omega_{0j})[ij]$. Now for given coefficients $f_{ij}\in K$, $\iota^*_\eta(\omega)=\sum_{i<j}f_{ij}[ij]$ for some $\omega$ requires $f_{ij}=\omega_{ij}-\omega_{0j}+\omega_{0i}$. Setting $\tau_k=\omega_{0\ell}$ entails
\[
\iota^{*-1}_\eta\big(\sum_{i<j}f_{ij}[ij]\,\big)=\Big\{\sum_\ell\tau_\ell\,\partial_0\wedge\partial_\ell+\sum_{i<j}
(f_{ij}+\tau_j-\tau_i)\,\partial_i\wedge\partial_j\mid\tau_1,\ldots,\tau_4\in K\Big\}\cong K^4.
\]
For any prime divisor $\PD$ in $\PP^4$, the Weil decoration of the direct sum of line bundles $(\Lambda^2V)(2)=\kk^{10}\otimes_k\CO(2)=\CO(2)^{10}$ is given by
\[
\WD_{\Lambda^2(V(1))}(\omega)_\PD=\WD_{(\Lambda^2V)(2)}(\omega)_\PD=\min\limits_{\mu<\nu}\!
\big\{\ord_\PD(\omega_{\mu\nu})+(H_\mu+H_\nu)_{\!\PD}\big\},
\]
cf.\ Corollary~\ref{coro:DirSum},~\eqref{eq:WDOl} and~\eqref{eq:StalkEl}. For any form $\varphi=\sum_{i<j}
\!f_{ij}[ij]\in\Lambda^2\CT$, Proposition~\ref{prop:QuotWeilDecos} implies
\begin{align*}
&\WD_{\Lambda^2\CT}(\varphi)_\PD=\max\limits_{\omega\in\iota^{*-1}_\eta(\varphi)}{\WD_{\Lambda^2(V(1))}(\omega)_\PD}\;=\\
&\max\limits_{\tau_\bullet\in K}\big\{\!\!\!\min\limits_{{\begin{array}{c}\scriptstyle i<j\\[-3pt]\scriptstyle\ell\end{array}}}\!\{\ord_\PD(\tau_\ell)+(H_0)_\PD+(H_\ell)_\PD,\,\ord_\PD(f_{ij}+\tau_j-\tau_i)+(H_i)_\PD+(H_j)_\PD\}\big\}.
\end{align*} 
If $\PD$ is in the torus, then the usual properties of valuations imply
\[
\min\{\ord_\PD(f_{ij}+\tau_j-\tau_i),\,\ord_\PD(\tau_i),\,\ord_\PD(\tau_j)\}=\min\{\ord_\PD(f_{ij}),\,\ord_\PD(\tau_i),\,\ord_\PD(\tau_j)\}
\]
for any pair of indeces $i<j$. This is the first case of Lemma~\ref{lem:WDL2T}. 

\smallskip

Next, let $\PD=H_k$, $k\neq0$; without loss of generality, take $\PD=H_1$ for sake of concreteness, and consider
\[
(*):=\min\limits_{1<\ell,\,1<i<j}\{\ord_\PD(\tau_1)+1,\,\ord_\PD(\tau_\ell),\,\ord_\PD(f_{1\ell}+\tau_\ell-\tau_1)+1,\,\ord_\PD(f_{ij}+\tau_j-\tau_i)\}
\]
for given $\tau_1,\ldots,\tau_4\in K$. Arguing as before we deduce $(*)\leq\ord_\PD(f_{1\ell})+1$ for $1<\ell$ and $(*)\leq\ord_\PD(f_{ij})$ for $1<i<j$. On the other hand, taking $\tau_1=\ldots=\tau_4=0$ yields
\begin{align*}
\max\{(*)\}\geq\min\{&\ord_\PD(f_{12})+1,\ord_\PD(f_{13})+1,\ord_\PD(f_{14})+1,\\
&\ord_\PD(f_{23}),\ord_\PD(f_{24}),\ord_\PD(f_{34})\}.  
\end{align*}
This gives~\eqref{eq:WDLambda2T}.
\end{proof}

\begin{remark}
Though we won't use it later on we note that after substituting $a_1$ by $-\sum_{\nu\neq1}a_\nu$, a straightforward computation yields the missing case
\[
\WD_{\Lambda^2\CT_\eta}\big(\sum f_{ij}[ij]\big)_{H_0}=\min\limits_{1<\ell,\,0<i<j<k}\{\ord_{H_0}(f_{1\ell})+1,\,\ord_{H_0}(f_{ij}-f_{ik}+f_{jk})\}. 
\]
\end{remark}

Next, consider the short exact sequence of sheaves $0\to V(2)\to\ker q\to\HM\to0$ with corresponding short exact sequence
\[
0\to V_K(2)\to\fbox{$\ker q_\eta=V_K(2)\oplus U_\alpha$}\to\fbox{$\HM_\eta=U_\alpha$}\to0
\]
of $K$-vector spaces. Surjectivity of $\ker(q)\subseteq\Lambda^2\CT\oplus\Lambda^2\CT\surj\HM$ implies 
\[
\WD_\HM(e)=\bigwedge\big\{\WD_{(\Lambda^2\CT\oplus\Lambda^2\CT)}(e + V_K(2))\big\}
\]
for each $e\in \HM_\eta=U_\alpha$ by Proposition~\ref{prop:QuotWeilDecos}. 
More explicitely, write any $e\in U_\alpha$ inside 
$\Lambda^2N_K\oplus\Lambda^2N_K$ as $e=(f\alpha^-\hspace{-0.3em},\newMinus g\alpha^+)$ for $(f,g)\in K^2\setminus\{(0,0)\}$, that is,
\[
e=f\cdot([12],0)\oldPlusWirdMinus g\cdot(0,[13])+f\cdot([14],0)
\oldPlusWirdMinus g\cdot(0,[23])\oldPlusWirdMinus g\cdot(0,[24])+f\cdot([34],0),
\]
cf.~\eqref{eq:AlphaPM}. By varying $\underline h=(h_0,\ldots,h_4)\in K^5$ we need to maximise the divisor
\begin{equation}
\label{eq:DHMForm}
\WD_\HM(f,g)(\underline h):=
\WD_{\Lambda^2\CT}\big(\widetilde{A}\cdot(\underline{h},f)^\top\big)\wedge\WD_{\Lambda^2\CT}\big(\widetilde{B}\cdot(\underline{h},g)^\top\big)\end{equation}
where $\widetilde{A}=(A|\alpha^-)$ and $\widetilde{B} = (B|\newMinus\alpha^+)$ are the $(6\times 6)$-matrices obtained from~\eqref{eq:MonadGS} and~\eqref{eq:AlphaPM}. To ease notation, we put 
\begin{equation}
\label{eq:MonFac}
a_\nu(\underline{z}):=\frac{z_\nu^2}{z_{\nu+2}\,z_{\nu-2}}
\quad\text{and}\quad
b_\nu(\underline{z}):=\frac{z_\nu^2}{z_{\nu-1}\,z_{\nu+1}}
\end{equation}
for $\nu\in\Z/5\Z$ and compute
\begin{align}
\widetilde{A}\cdot(\underline h,f)^\top=&\big((a_3h_3+a_4h_4+f)[12]+a_3h_3[13]+(a_2h_2+a_3h_3+f)[14]\nonumber\\
&\;+a_0h_0[23]+a_2h_2[24]+(a_1h_1+a_2h_2+f)[34]\big)^\top,\nonumber\\
\widetilde{B}\cdot(\underline h,\newMinus g)^\top=&
\big(b_1h_1[12]-(b_2h_2+b_4h_4\oldMinusWirdPlus g)[13]+b_0h_0[14]\nonumber\\
&\;-(b_1h_1+b_4h_4\oldMinusWirdPlus g)[23]-(b_1h_1+b_3h_3 \oldMinusWirdPlus g)[24]+b_4h_4[34]\big)\label{eq:VecEval}.
\end{align}
Fix a prime divisor $\PD$ and let $|\cdot|$ be shorthand for $\ord_\PD$. We set out to prove the formula of Theorem~\ref{thm:WDofHM}\,, that is, for $\PD=H_\nu=\{z_\nu=0\}$, $\nu\in\Z/5\Z$, we have
\[
\WD_\HM(f,g)_{H_\nu}=\min\{\ord_{H_\nu}(f),\ord_{H_\nu}(g)\}+1\quad
\text{if }(f/g)(H_\nu)=\oldMinus\frac{z_{\nu+1}z_{\nu-1}}{z_{\nu+2}z_{\nu-2}}
\]
and $\WD_\HM(f,g)_\PD=\min\{\ord_\PD(f),\ord_\PD(g)\}$ for $\PD$ else.

\medskip

The case $\PD\in\TT$ is a routine check left to the reader. For $\PD=H_k$, $k\geq0$, we assume $k=1$, as the setup is clearly symmetric in the boundary divisors $H_k$. First, we can discard straightaway the $h_0$-term when taking the maximum. Since $\underline{h}=0$ yields the lower bound $\WD_\HM(f,g)_{H_1}\geq\fbox{$m:=\min\{|f|,|g|\}$}$ from~\eqref{eq:DHMForm} we analyse what happens for $\WD_\HM(f,g)_{H_1}>m$. The monomial factors $a_\nu$ and $b_\nu$ from Equation~\eqref{eq:MonFac} contribute
\begin{equation}
\label{eq:MonimalsHi}
|a_1|=|b_1|=2,\quad|a_3|=|a_4|=|b_0|=|b_2|=-1\quad\text{and}\quad0\text{ otherwise} 
\end{equation}
with respect to $|\cdot|=\ord_{H_1}$. From Lemma~\ref{lem:WDL2T} and Equation~\eqref{eq:VecEval} we gather
\begin{align*}
\renewcommand{\arraystretch}{1.3}
\WD_{\Lambda^2\CT}\big(\widetilde A\cdot(\underline{h},f)^\top\big)_{H_1}=&\min\{|a_3h_3+a_4h_4+f|+1,\,|h_3|,|a_2h_2+a_3h_3+f|+1,\\
&\;|h_2|,\,|a_1h_1+a_2h_2+f|\}.
\end{align*}
In particular, $|h|_2\geq m+1$ whence $|a_1h_1+f|\geq m+1$, for $|a_1h_1+a_2h_2+f|=\min\{|a_1h_1+f|,|a_2h_2|\}$. As a result, the terms of order $m$ in $a_1h_1+f$, if any, must cancel. Similarly, looking at $\WD_{\Lambda^2\CT}\big(\widetilde B\cdot(\underline{h},g)^\top\big)_{H_1}$, we see that $b_1h_1+g$ has no terms of order $m$. For a local parameter $t\in\idm_{\PP^4,H_1}$ both cancellations happen simultanously if and only if
\[
(a_1h_1t^{-m})(H_1)=-(ft^{-m})(H_1)\quad\text{and}\quad
(b_1h_1t^{-m})(H_1)=\newMinus (gt^{-m})(H_1), 
\]
or equivalently, if and only if $(b_1ft^{-m})(H_1)=\oldMinus(a_1gt^{-m})(H_1)$. This means that with respect to the degree in $t$, the lowest order term of $b_1ft^{-m}$ must equal the lowest order term of $\oldMinus a_1gt^{-m}$, or equivalently, 
\[
(f/g)(H_1)=\oldMinus a_1/b_1=\oldMinus\frac{z_{0}z_{2}}{z_{3}z_{4}}.
\]
This completes the proof of Theorem~\ref{thm:WDofHM}.

\section{Reflexive sheaves of \texorpdfstring{$\HM$}{HM}-type}
\label{sec:HMtype}
In the previous Section~\ref{sec:HMBundle} we derived the Weil decoration of the classical Horrocks-Mumford bundle. Next, we axiomatise and generalise this construction. For this, let $X$ an algebraic variety over $\kk$ with a simple normal crossing divisor $D$.

\begin{proposition}
\label{prop:WDHMtype}
For each prime divisor $D_\rho$ supporting $D$ choose a unit $h_\rho\in\kappa(D_\rho)^*$. The assignment $\WD_h\colon K^2\to\hatDiv(X)$ determined by
\[
\WD_h(f,g)_\PD:=
\begin{cases}
\min\{\ord_P(f),\ord_P(g)\}+1,\!\!&\PD=D_\rho\text{ and }
(f/g)(D_\rho)=h_\rho\\
\min\{\ord_P(f),\ord_P(g)\},\!\!&\text{else}
\end{cases}
\]
defines a Weil decoration.
\end{proposition}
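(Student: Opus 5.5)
Our plan has two parts. First we show that $\WD_h$ is a pre-Weil decoration. Then we establish coherence through Proposition~\ref{prop:CohCond}, by exhibiting a trivial Weil decoration agnate to $\WD_h$.

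For the first part, we write $\WD_h$ as the family of semi-norms $|\cdot|_\PD$ given by the formula in the statement. For $\PD=D_\rho$ this is exactly $\varphi_{h_\rho,D_\rho}$, which is a non-archimedean semi-norm by Proposition~\ref{prop:SemiNorm}. For every other $\PD$ it is $\min\{\ord_\PD(f),\ord_\PD(g)\}$, the standard semi-norm. Hence (W1) and (W2) hold coefficientwise. Property (W0) is immediate: for $(f,g)\neq(0,0)$ every coefficient is finite, while $(0,0)$ gives $\infty$. It remains to check that $\WD_h(f,g)$ is a genuine divisor, i.e.\ has only finitely many nonzero coefficients. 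Here
\[
\WD_h(f,g)=\big(\div(f)\wedge\div(g)\big)+\sum_{\rho\in S(f,g)}D_\rho,
\]
where $S(f,g)$ is the set of components with $(f/g)(D_\rho)=h_\rho$. Since $D$ has finitely many components, the correction term is a finite sum of prime divisors, so the formula \eqref{eq:WDFam} applies.

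For coherence, the natural candidate is the trivial decoration $\WD_0(f,g)=\div(f)\wedge\div(g)$, for which $(1,0),(0,1)$ is an $X$-orthogonal basis. Its sheaf is $\CO_X^2$, which is coherent. The display above gives
\[
\WD_0\le\WD_h\le\WD_0+D,
\]
because $D=\sum_\rho D_\rho$ is reduced. Consequently $\WD_0-D\le\WD_h\le\WD_0+D$, so $\WD_h$ is agnate to $\WD_0$, and Proposition~\ref{prop:CohCond} yields coherence.

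One caution: the proof of Proposition~\ref{prop:CohCond} as written only argues the forward implication, and dismisses the converse as ``clear''. To be safe we would verify directly, locally on an affine $U=\Spec A$, that $\CO_X(\WD_h)$ is coherent. The inclusions $\CO_X^2\subseteq\CO_X(\WD_h)\subseteq\CO_X(\WD_0+D)=\CO_X(D)^2$ hold. Since $A$ is Noetherian and $\CO_X(D)^2(U)$ is a finitely generated $A$-module, the submodule $\CO_X(\WD_h)(U)$ is finitely generated. Quasi-coherence is Proposition~\ref{prop:QCSPWD}, so the sheaf is coherent.

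The simple normal crossing hypothesis and the choice of units $h_\rho$ are not needed beyond making the $D_\rho$ finitely many distinct prime divisors. The main obstacle, the ultrametric inequality on the boundary components, was already handled in Proposition~\ref{prop:SemiNorm}. Contrast this with Example~\ref{exam:PreWeilNotWeil}, where infinitely many prime divisors carry a nontrivial correction; this is precisely what the finiteness of $\opn{supp}(D)$ rules out here.
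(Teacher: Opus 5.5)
Your proof is correct and follows the paper's argument. You show $\WD_h$ is a pre-Weil decoration via Proposition~\ref{prop:SemiNorm}, then sandwich it as $\WD_0\le\WD_h\le\WD_0+D$ against the trivial decoration and apply Proposition~\ref{prop:CohCond}. Your additional local check, that $\CO_X(\WD_h)$ lies between $\CO_X^2$ and $\CO_X(D)^2$ and is therefore finitely generated over a Noetherian ring, is exactly the ``clear'' converse of Proposition~\ref{prop:CohCond} that the paper leaves implicit, so it is a welcome but not essential addition.
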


\begin{proof}
Setting $h_{D_\rho}=h_\rho$ in Proposition~\ref{prop:SemiNorm}\,, $\WD_h$ defines a pre-Weil decoration. Moreover, $\WD_h$ differs from a trivial Weil decoration by at most $D$. Proposition~\ref{prop:CohCond} implies that $\WD$ is a Weil decoration.
\end{proof}

\begin{definition}
\label{def:HMtype}
The reflexive rank two sheaf associated with $\WD_h$ is called a {\em H(orrocks)-M(umford) sheaf on $(X,D)$}, written $\HM(X,D,h)$ or simply $\HM(h)$.
\end{definition}

\begin{example}
The Horrocks-Mumford bundle $\HM$ on $\PP^4$ is obtained by taking $D=\partial\PP^4=\sum_{\rho=0}^4H_\rho$ for the coordinate hyperplanes $H_\rho=\{z_\rho=0\}$, and
\begin{equation}
\label{eq:HMh}
h_\rho=z_{\rho+1}z_{\rho-1}/z_{\rho+2}z_{\rho-2}\in\kappa^*(H_\rho),\quad\rho\in\Z/5\Z. 
\end{equation}
\end{example}

\begin{remark}
\label{rem:CanInc}
By design, we have $\CO_X^2\subseteq\HM_u(X,D)\subseteq\CO_X(D)^2$. In particular, $e_1=(1,0)$ and $e_2=(0,1)$ provide a $K$-basis of $\HM_h(X,D)_\eta=K^2$ which in general is not $X$-orthogonal, but satisfies $(\WD_h)_E\equiv0$ for its induced slice $E=\spann_\kk\{e_1,\,e_2\}$; cf.\ also Remark~\ref{rem:Slices} (ii). 
\end{remark}

The previous example highlights a natural subclass of $\HM$-sheaves given by toric varieties $X=\toric(\Sigma)$ and their anti-canonical divisor $D=\partial X=\sum_{\rho\in\Sigma(1)}D_\rho$. Recall that $x^m\in\kk[M]$ is the regular function on the torus defined by $m$ in the character lattice $M$, cf.\ \ssect{subsec:TS}. For a ray $\rho$, pick $u_\rho\in M$ such that the orthogonality condition $\langle u_\rho,\rho\rangle=0$ holds; this ensures that 
$x^{u_\rho}\in\CO_X(\TT)\subseteq K$  is a unit in the local ring $\CO_{X,D_\rho}$, that is, $h_\rho=x^{u_\rho}(D_\rho)\in\kappa(D_\rho)^*$. By~\eqref{eq:FundSeq}, we actually may view $u_\kbb$ as a map
\[
\Div_\TT(X)=\Z^{\Sigma(1)}\to M\subseteq\Div_\TT(X) 
\]
conveniently encoded in an integer $\sharp\Sigma(1)\times\sharp\Sigma(1)$-matrix with vanishing diagonal and $[\cdot]\circ u=0$, where $[D]\in\opn{Cl}(X)$ is the class of the divisor $D$. For instance, the rows of $u$ must add to zero for $X=\PP^n$. The restriction does not apply if $\opn{Cl}(X)=0$, e.g., $X=\A^n_\kk$. We write \fbox{$\HM(X,u)$} or simply $\HM(u)$ for the corresponding $\HM$-sheaf.

\begin{example}
For the classical Horrocks-Mumford bundle given by~\eqref{eq:HMh}, we find
\begin{equation}
\label{eq:MatHM}
u=\Matr{5}{ 
0&1&-1&-1&1\\
1&0&1&-1&-1\\
-1&1&0&1&-1\\
-1&-1&1&0&1\\
1&-1&-1&1&0};
\end{equation}
as an additional feature, the matrix is symmetric.
\end{example}

\begin{proposition}
For a smooth semi-projective toric variety $X^n$ with $u=0$, we have an explicit isomorphism
\[
\HM(X,0)=\CO_X\oplus\CO_X(\partial X).  
\]
\end{proposition}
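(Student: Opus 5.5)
With $u=0$ every unit is $h_\rho=x^{0}(D_\rho)=1$. The plan is to compute $\WD_h$ in a suitably chosen $K$-basis of $K^2$. In that basis it should become the Weil decoration of a direct sum of two rank one sheaves, and Corollary~\ref{coro:DirSum} together with Theorem~\ref{thm:EquivalenceCat} then gives the isomorphism. The condition $(f/g)(D_\rho)=1$ singles out the diagonal direction, so I take the basis
\[
e_1'=(1,1),\qquad e_2'=(1,0).
\]
A general vector is then $(f,g)=a\,e_1'+b\,e_2'$ with $f=a+b$ and $g=a$.

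The claim to verify is that for every prime divisor $\PD$
\[
\WD_h(a e_1'+b e_2')_\PD=\min\{\ord_\PD(a)+(\partial X)_\PD,\;\ord_\PD(b)\}.
\]
The right-hand side is exactly $\big(\WD_{\CO_X(\partial X)}(a)\wedge\WD_{\CO_X}(b)\big)_\PD$. Write $|\cdot|=\ord_\PD$.

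\emph{Case $\PD$ not a boundary divisor.} The value is $\min\{|a+b|,|a|\}$, and by the ultrametric inequality this equals $\min\{|a|,|b|\}$.

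\emph{Case $\PD=D_\rho$.} The coefficient $(\partial X)_\PD$ is $1$, and I split according to the relative size of $|a|$ and $|b|$.
\begin{itemize}
\item[(1)] If $|b|>|a|$, then $|f|=|g|=|a|$ and $(f/g)(D_\rho)=1$, since $f/g=1+b/a$ with $b/a$ vanishing along $D_\rho$. So the value is $|a|+1$, which equals $\min\{|a|+1,|b|\}$ because $|b|\ge|a|+1$.
\item[(2)] If $|b|<|a|$, then $|f|=|b|<|g|$. The condition fails, since it forces $|f|=|g|$. The value is $|b|=\min\{|a|+1,|b|\}$.
\item[(3)] If $|b|=|a|$, then $f/g=1+b/a$ evaluates to $1+(b/a)(D_\rho)\neq1$, because $b/a$ is a unit at $D_\rho$. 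So the value is $\min\{|a+b|,|a|\}=|a|=|b|=\min\{|a|+1,|b|\}$.
\end{itemize}
In every case the formula holds.

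The $K$-linear isomorphism $K^2\to K\oplus K$, $a e_1'+b e_2'\mapsto(a,b)$, is therefore an isomorphism of Weil decorations. Here the target carries $\WD_{\CO_X(\partial X)}\wedge\WD_{\CO_X}$, which by Corollary~\ref{coro:DirSum} and Example~\ref{exam:LineBundle} is the Weil decoration of $\CO_X(\partial X)\oplus\CO_X$. Applying the functor $F$ of Theorem~\ref{thm:EquivalenceCat} yields the explicit isomorphism $\HM(X,0)\cong\CO_X\oplus\CO_X(\partial X)$.

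The only delicate step is the boundary case analysis, specifically case (3), where one must use that $b/a$ is a unit at $D_\rho$ so that its value there is nonzero. The smoothness and semi-projectivity hypotheses seem needed only to guarantee that $\partial X$ is a simple normal crossing divisor, so that $\HM(X,0)$ is defined via Proposition~\ref{prop:WDHMtype}.
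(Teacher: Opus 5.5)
Your proof is correct. It arrives at the same splitting as the paper, namely the diagonal copy of $\CO_X(\partial X)$ spanned by $(1,1)$ plus $\CO_X\oplus 0$ spanned by $(1,0)$, but it verifies that splitting differently.

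\emph{The paper's route.} It works on the sheaf side. It realises $\HM(X,0)$ as the kernel of $\Psi\colon\CO_X(\partial X)^2\to\CO_X(\partial X)/\CO_X$, $(f,g)\mapsto\overline{f-g}$. It then uses smoothness and semi-projectivity to cover $X$ by charts $U_\sigma\cong\A^n$, $\sigma\in\Sigma(n)$. On these charts it writes down local frames $(1,1)/\prod_{\rho\in\sigma(1)}x_\rho$ and $(1,0)$, which glue to $\Delta(\CO_X(\partial X))$ and $\CO_X\oplus0$.

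\emph{Your route.} You stay on the decoration side. You check coefficientwise that in the basis $(1,1),(1,0)$ the decoration $\WD_h$ becomes $\WD_{\CO_X(\partial X)}\wedge\WD_{\CO_X}$, and then let Corollary~\ref{coro:DirSum} and Theorem~\ref{thm:EquivalenceCat} finish the argument. In fact your formula already gives the equality of subsheaves $\HM(X,0)=\CO_X(\partial X)\cdot(1,1)\oplus\CO_X\cdot(1,0)$ inside $K^2$, so the equivalence of categories is not really needed.

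\emph{What each buys.} Your argument is chart-free. It also makes unnecessary the identification $\HM(X,0)=\ker\Psi$, which the paper leaves implicit. Moreover, it uses none of the hypotheses: the same computation gives $\CO_X\oplus\CO_X(D)$ for any normal $X$ and any reduced divisor $D$ with all $h_\rho=1$. The paper's route, in exchange, produces explicit local frames on the standard affine charts.

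\emph{A correction to your closing remark.} Smoothness does make $\partial X$ a simple normal crossing divisor, but semi-projectivity has nothing to do with that. In the paper, smoothness and semi-projectivity are used together to get the cover of $X$ by affine spaces.
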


\begin{proof}
Consider the complex 
\[
\xymatrix{
0\ar[r]&\HM(X,0)\ar[r]&\CO_X(\partial X)^2\ar[r]^-\Psi&\CO_X\big(\partial X\big)\Big/\CO_X\ar[r]&0 
}
\]
induced by $\Psi(f,g)=\overline{f-g}$, the residue class of $f\oldPlusWirdMinus g$. Since $X$ is smooth and semi-projective, the torus-invariant open affines $U_\sigma=\Spec\kk[x_\rho\,|\,\rho\in\sigma(1)]=\A^n_\kk$ associated with the top-dimensional cones $\sigma\in\Sigma(n)$ cover $X$. The local bases $e_0=(1,1)/\prod_{\rho\in\sigma(1)}\!x_\rho$ and $e_1=(1,0)$ then glue to the diagonal $\Delta\big(\CO_X(\partial X)\big)$ and $\CO_X\oplus0$ inside $\CO_X(\partial X)^2$.
\end{proof}

\begin{example}
For $u\neq0$, the complexity increases quickly as revealed by the following computations (which were carried out by Oscar~\cite{OSCAR}). For instance, already in dimension $3$, there are non-locally free $\HM$-sheaves. Indeed, consider the matrices 
\[
u_1=\left(\begin{array}{rrr}
0&1&0\\
1&0&-1\\
0&-1&0
\end{array}\right)
\quad\text{and}\quad
u_2=\left(\begin{array}{rrr}
0&1&-1\\
1&0&-1\\
-1&-1&0
\end{array}\right).
\]
The $\kk[x_1,x_2,x_3]$-module $\HM(\A^3,u_1)$ is not free by Nakayama, for its generators
\[
v_1=\big(\frac{1}{x_2x_3}+\frac{1}{x_1},\;
\frac{1}{x_3}+\frac{1}{x_1x_2}\big),\;
v_2=\big(\frac{1}{x_2}+\frac{x_3}{x_1},\;
\oldMinus\frac{x_3}{x_1x_2}\big)\;\text{ and }\; 
v_3=\big(\frac{x_2}{x_1},\;\oldMinus\frac{1}{x_1}\big)
\]
satisfy the syzygy $x_3v_1+(x_2^2-1)v_2-(x_1+x_2x_3)v_3=0$ vanishing at
$(0,\pm1,0)$. On the other hand, the seemingly more complicated
$\HM(\A^3,u_2)$ has minimal number of (free) generators
\[
v_1=\big(\frac{1}{x_2x_3}+\frac{1}{x_1x_3},\;\frac{x_1}{x_3}+\frac{x_2}{x_3}+\frac{1}{x_1x_2}\big)
\;\text{ and }\; 
v_2=\big(\frac{x_2}{x_1x_3},\;\frac{x_2}{x_3}+\frac{1}{x_1}\big).
\]
\end{example}

\begin{remark}
In general, the $\kk[\ux]$-module $\HM(\A^n,u)$, which appears, for instance, as $\HM$-sheaf on an affine chart of $\PP^n$ or on the affine cone over $\PP^{n-1}$, equals the reflexive hull of
\[
\Big\langle\frac{1}{x_i}\cdot\big(x^{u^+(i)},\,
\oldMinus x^{u^-(i)}\big)\kSt
i=1,\ldots,n\Big\rangle+\kk[\ux]^2\;\subseteq\;\frac{1}{x_1\cdot\ldots\cdot
x_n}\cdot\kk[\ux]^2;
\]
here, $u^+(i)$ and $u^-(i)\in\N^n$ denote the positive and negative part of
$u_i=u^+(i)-u^-(i)$ in $M=\Z^n$. Alternatively, we can compute $\HM(\A^n,u)$ as the intersection
\[
\HM(u)\;=\;\frac{1}{x_1\cdot\ldots\cdot x_n} \cdot
\bigcap\limits_{i=1}^n\Big\langle (x^{u^+(i)},\,
\oldMinus x^{u^-(i)}),\;(x_i,0),\;(0,x_i)\Big\rangle.
\]
For details, see~\cite{WHMtoric}.
\end{remark}

\end{document}